 \newtheorem{thm}[equation]{Theorem}
 \theoremstyle{definition}
 \newtheorem{defn}[equation]{Definition}
 \theoremstyle{remark}
 \newtheorem{rem}[equation]{Remark}
 \newtheorem{siderem}[equation]{Side Remark}
 \newtheorem{ex}[equation]{Example}
 \numberwithin{equation}{section}
\begin{document}
\newcommand{\w}{\widetilde}
\newcommand{\R}{\mathbb{R}}
\newcommand{\Z}{\mathbb{Z}}
\renewcommand{\o}{\overline}
\renewcommand{\dim}{\mathrm{dim}\,}
\renewcommand{\l}{\left}
\renewcommand{\r}{\right}
%
%
%
%
%
%
%
%
%
\title[Reidemeister coincidence invariants of fiberwise maps]
 {\centering Reidemeister coincidence invariants\\ of fiberwise maps}
\author[Koschorke]{\centering Ulrich Koschorke}

\address{%
FB 6 (Mathematik)\\
Emmy-Noether-Campus\\
Universit\"{a}t\\
D 57068 SIEGEN\\
Germany}
\email{koschorke@mathematik.uni-siegen.de}


\subjclass{Primary 54H25, 55M20; Secondary 55R10, 55S35}

\keywords{fiberwise maps, Nielsen number, Reidemeister set, orbit structure, torus bundles}

\date{10.07.2009}
\dedicatory{In memory of \\Jakob Nielsen (15.10.1890 - 3.8.1959).}

\begin{abstract}
Given two fiberwise maps \(\ f_1,\ f_2\ \) between smooth fiber bundles over a base manifold \(B\), we develop techniques for calculating their Nielsen coincidence number. In certain settings we can describe the Reidemeister set of \(\ (f_1,f_2)\ \) as the orbit set of a group operation of \(\ \pi_1(B)\). The size and number of orbits captures crucial extra information. E.g. for torus bundles of arbitrary dimensions over the circle this determines the minimum coincidence numbers of the pair \(\ (f_1,f_2)\ \) completely. In particular we can decide when \(f_1\) and \(f_2\) can be deformed away from one another or when a fiberwise selfmap can be made fixed point free by a suitable homotopy. In two concrete examples we calculate the minimum and Nielsen numbers for all pairs of fiberwise maps explicitly. Odd order orbits turn out to play a special role.
\end{abstract}

\maketitle
\section{Introduction and discussion of results}\label{sec1}
Let \(\ f: M\to M\ \) be a self-map of a manifold. The principal object of study in topological fixed point theory is the minimum number of fixed points among all maps homotopic to \(f\) (cf. [B], p. 19).

Similar minimum numbers also play a central role in coincidence theory where one considers two maps \(\ f_1,f_2: M\to N\ \) and tries to make the coincidence subspace 
\begin{equation}\label{1.1}
C(f_1,f_2)\ =\ \left\{x\in M\ \left| \ f_1(x)=f_2(x)\right.  \right\}
\end{equation}
of \(M\) as small as possible by deforming \(\ f_1\ \) and \(\ f_2\).

There is a good lower bound for such minimum numbers: the Nielsen number \(\ \mathrm{N}(f_1,f_2)\ \) (cf. e.g. [Ko1], 1.9). It depends only on the homotopy classes of \(f_1\) and \(f_2\) and counts the "essential" elements of a certain "Reidemeister set" (a geometric description of these concepts will be given below).

Two question arise. First:

\emph{Does the Nielsen number determine the minimum numbers?}

In fixed point theory the fundamental work of B. Jiang has shown that this may fail to be the case (cf. [Ji1], [Ji2], [Ji3] and also [Z], [Ke]); for further counterexamples in general coincidence theory see e.g. [Ko1], 1.17. Nevertheless, ever since the work of Wecken [W] many settings are known where minimum numbers are equal to (or can at least be described with the help of) \(\ \mathrm{N}(f_1,f_2)\ \) (for the stable dimension range see e.g. [Ko1], theorem 1.10).

Thus we are faced with the next question:

\emph{How can we compute the Nielsen number?}

In this paper we present an approach to this problem in the setting of fiberwise maps.

Consider the diagram
\begin{equation}\label{1.2}
\xymatrix{
& F_M \ar[d]_-{i_M} && F_N  \ar[d]\\
f_1,f_2:\!\!\!& M\ar[rr] \ar[rd]_-{p_M}&& N\ar[ld]^-{p_N}\\
&& B
}
\end{equation}
where \(\ p_M\ \) and \(\ p_N\ \) are the projection maps of smooth, locally trivial fibrations of closed connected manifolds and \(\ F_M\,, F_N\ \) denote the fibers over a basepoint \(\ \ast\ \) of \(B\). We assume that \(\ f_1,f_2\ \) are \emph{fiberwise maps}, i.e. that they make our diagram commute, and we are interested in their (fiberwise) minimum numbers
\begin{equation}\label{1.3}
\begin{split}
&\mathrm{MC}_B(f_1,f_2)\ =\ \min\left\{\#\ C(f_1',f_2')\ \left|\ f_i'\,\sim_B\, f_i,\ i=1,2\right.\right\}\\\text{and}\quad&\\
&\mathrm{MCC}_B(f_1,f_2)\ =\ \min\left\{\#\ \pi_0\left(C(f_1',f_2')\right)\ \left|\ f_i'\,\sim_B\ f_i,\ i=1,2\right.\right\}
\end{split}
\end{equation}
of \emph{coincidence points} and of \emph{coincidence components} (cf. \ref{1.1}). Here \(\ \sim_B\ \) denotes homotopies through fiberwise maps; if \(X\) ist any topological space, \(\ \pi_0(X)\ \) stands for its set of pathcomponents. Since we assume \(M\) to be compact \(\ \mathrm{MCC}_B\ \) is always finite and in general more likely to distinguish interesting coincidence phenomena than \(\ \mathrm{MC}_B\ \) (which is often infinite).

Next we describe the Reidemeister set and the Nielsen number of the pair \(\ (f_1,f_2)\ \) (for details compare [Ko1] and [GK]). Note that the coincidence set \(\ C=C(f_1,f_2)\ \) (c.f. \ref{1.1}) fits into the commuting triangle
\begin{equation}\label{1.4}
  \xymatrix{
	&**[r]\ar[d]^-{\mathrm{pr}}\save{%
	 \small{E_B(f_1,f_2):=\left\{(x,\theta)\in M\times P(N) \,\left|\,
	  \begin{matrix} p_{N}\, \circ\,\theta\,\equiv\, p_M(x);\\ \theta(0)=f_1(x),\\
	\,\theta(1)=f_2(x)\end{matrix}\right.\right\}}\restore} \\
	C \ar[ur]^-{\widetilde{g}} \ar[r]_-{g=\mathrm{incl}}&  M
  }
\end{equation}
Here \(\ P(N)\ \) denotes the space of all continuous paths \(\ \theta:\,[0,1]\to N\). Moreover for all \(\ x\in C\ \) we define
\[
\w{g}(x)=(x, \text{constant path at }f_1(x)=f_2(x)).
\]
Thus \(\ \w{g}\ \) lifts the inclusion map \(g\) in the Hurewicz fibration \(\ \mathrm{pr}\ \) (defined by the first projection).
Generically \(C\) is a closed smooth manifold, equipped with a description \(\ \o{g}\ \) of its stable normal bundle in terms of the pullback (under \(\ \w{g}\ \)) of the virtual vector bundle
\[
\w{\varphi}\ =\ \mathrm{pr}^\ast\left(f_1^\ast(TN)-TM-p_M^\ast(TB)\right)
\]
over \(\ E_B(f_1,f_2)\). These data yield a welldefined normal bordism class
\begin{equation}\label{1.5}
 \w{\omega}_B(f_1,f_2)\ =\ \left[C,\w{g},\o{g}\right]\ \in\ \Omega_\ast\left(E_B(f_1,f_2);\w{\varphi}\right)
\end{equation}
which vanishes if (and in the stable dimension range\[
\dim{M}<2(\dim{N}-\dim{B})-2
\]
only if) the pair \(\ (f_1,f_2)\ \) can be made coincidence free by fiberwise homotopies (cf. [GK], theorem 1.1; for further details concerning various versions of the \(\ \omega\ \)-invariant see e.g. [Ko1-5] and [KR]).

Note that the decomposition of the topological space \(\ E_B(f_1,f_2)\ \) into its pathcomponents \(Q\) induces a direct sum decomposition of normal bordism groups
\[
 \Omega_\ast\left(E_B(f_1,f_2);\w{\varphi}\right)=\bigoplus_{Q\in\pi_0\left(E_B(f_1,f_2)\right)} \Omega_\ast(Q; \w{\varphi}|Q)
\]
and a corresponding decomposition of the coincidence invariant \(\ \w{\omega}_B(f_1,f_2)\). We will count its nontrivial direct summands.

\begin{defn}(compare [GK], section 4)\label{1.6}\\
 The \textbf{geometric Reidemeister set} {\boldmath \(\ {\mathrm{R}}_{B}(f_1,f_2)\ \)} is the set \(\ \pi_0\left(E_B(f_1,f_2)\right)\ \) of all pathcomponents of \(\ E_B(f_1,f_2)\).

We call such an pathcomponent \(Q\) \textbf{essential} if it contributes nontrivially to \(\ \w{\omega}_B(f_1,f_2)\), i.e. if the triple \(\ \left(\w{g}^{-1}(Q),\w{g}|,\o{g}|\right)\ \) of (partial) coincidence data represents a nonvanishing bordism class in \(\ \Omega_\ast(Q; \w{\varphi}|Q)\).

The \textbf{Nielsen number} {\boldmath \({\mathrm{N}}_B(f_1,f_2)\)} is the number of essential elements \(\ Q\ \in\ \pi_0\left(E_B(f_1,f_2)\right)\ =\ \mathrm{R}_B(f_1,f_2)\).
\end{defn}

The Nielsen number depends only on the fiberwise homotopy classes of \(f_1\) and \(f_2\) (compare [Ko1], 1.9). Consequently
\begin{equation}\label{1.7}~\\
 \mathrm{N}_B(f_1,f_2)\ \leq\ \mathrm{MCC}_B(f_1,f_2)\ \leq\ \mathrm{MC}_B(f_1,f_2).
\end{equation}

\begin{ex}: \textit{Classical Nielsen fixed point theory in manifolds}\label{1.8}\\
  Here \(B\) consists only of one point and \((f_1,f_2)=(f,\mathrm{id})\) where \(f\) is a selfmap of a connected manifold \(M=N\). There is a bijection from the classical (''algebraic``) Reidemeister set \(\pi_1(M)\diagup\!\!\sim\,\,\ \)  onto \(R_B(f,\mathrm{id})=\pi_0\left(E_B(f,\mathrm{id})\right)\) and
\[\widetilde{\omega}_B(f,\mathrm{id})\, \in\, \, \Omega_0\left(E_B(f,\mathrm{id});\,\widetilde{\varphi}\right)=\bigoplus_{Q\in \mathrm{R_B}(f, \mathrm{id})}\mathbb{Z}\]
records the indices of all the Nielsen fixed point classes of \(f\) (cf. [Ko1]). In particular, our definition \ref{1.6} agrees with the classical definition of Nielsen and Reidemeister numbers (cf. [B]). Similary, \(\mathrm{MC}(f,\mathrm{id})\) is just the classical minimum number \(\mathrm{MF}(f)\) of fixed points (cf. [Br]).\(\hfill\Box\)
\end{ex}

Next we compose the fiber projections \(p_M\) (cf. \ref{1.2}) and pr (cf. \ref{1.4}) and obtain the Serre fibration
\begin{equation}\label{1.9}
 p_M\ \circ\ \mathrm{pr}\ :\ E_B(f_1,f_2)\ \longrightarrow\ B.
\end{equation}
Its fiber involves the maps 
\begin{equation}\label{1.10neu}
f_i|F_M\ :\ F_M\longrightarrow F_N,\ i=1,2. 
\end{equation}\\
\indent In view of definition \ref{1.6} the following questions come to mind very naturally.

\emph{Given a pathcomponent \(Q\) of \(\ E_B(f_1,f_2)\ \), can we decide - just on the basis of its topological properties (and without refering to \(\ \w{\omega}(f_1,f_2)\ \)) - whether it is essential? E.g. does the fibration \[ p_M\ \circ\ \mathrm{pr}\ |\ :\ Q\ \longrightarrow\ B\] give the necessary clues?}

More realistically: \emph{Can we determine \(\ \mathrm{N}_B(f_1,f_2)\ \) from knowing \(p_M\ \circ\mathrm{pr}\ \) on \emph{all} of \(\ E_B(f_1,f_2)\ \)?}

As we will see the answer to this last question is ''{yes!}`` in some very interesting cases. It involves the natural group action of the fundamental group \(\ \pi_1(B,\ast)\ \) on the set \(\ \pi_0(E|)\ \) (of pathcomponents of the fiber
\begin{equation}\label{1.11neu}
 E|\ :=\ \left(p_M\ \circ\ \mathrm{pr}\right)^{-1}\{\ast\}\ =\ E\left(f_1|F_M,\ f_2|F_M\right),
\end{equation}
compare \ref{1.10neu}) defined as follows. Given a loop \(\ \gamma:[0,1]\to B\ \) at \(\ \ast\ \) and a pathcomponent \(\ Q'\in\pi_0(E|)\ \), lift \(\gamma\) to a path \(\w{\gamma}\) in \(\ E_B(f_1,f_2)\ \) which starts in \(Q'\); then \(\ [\gamma]\cdot Q'\ \) is the pathcomponent of \(\ \w{\gamma}(1)\). Each orbit of this group action consists of the pathcomponents of \(\ E|\cap Q\ \) for some \(\ Q\ \in\ \pi_0\left(E_B(f_1,f_2)\right)\ \) and we can identify the orbit set with the geometric Reidemeister set \(\ \pi_0\left(E_B(f_1,f_2)\right)\).

We want to introduce an invariant which keeps track of this group operation (and not just of its orbit set).

\begin{defn}\label{1.10}
 Let \(\ G,G'\ \) be abelian groups.
\begin{enumerate}[(i)]
 \item A map \(\ h:G\to G'\ \) is called \textbf{affine} (or \textbf{affine isomorphism}, resp.) if it is the sum of a group homomorphism (or isomorphism, resp.) and a constant map.
\item Two group operations \(\ \beta,\ \beta'\ \) of \(\ \pi_1(B,\ast)\ \) on (the underlying sets of) \(G\) and \(G'\), resp., are \textbf{equivalent} if they are transformed into one another by some affine isomorphism \(\ h:G\to G'\ \) (i.e. for all \(\ y\in\pi_1(B,\ast)\ \) and \(g\in G\), we have \(\ h(\beta(y,g))=\beta'(y,h(g))\)).
\item Let \(\ \mathcal{R}_B\ \) denote the set of equivalence classes of the group operations of \(\ \pi_1(B)=\pi_1(B,\ast)\ \), acting on abelian groups via affine automorphisms.
\end{enumerate}
\end{defn}

Now assume that the fibers \(F_M, F_N\) are pathconnected and \(\ \pi_1(F_N)\ \) is abelian. Then there is a bijection between \(\ \pi_0(E|)\ \) and 
\begin{equation}\label{1.11}
 G\ :=\ \mathrm{Coker}\left(f_1|_\ast-f_2|_\ast\ :\ H_1(F_M;\Z)\to H_1(F_N;\Z)\right)
\end{equation}
which depends on the choice of a basepoint in \(E|\) but is unique up to an affine automorphism of \(G\). Thus the group action of \(\ \pi_1(B,\ast)\ \) on \(\ \pi_0(E|)\ \) gives rise to a welldefined element
\begin{equation}\label{1.12}
 \varrho_B(f_1,f_2)\ \in\ \mathcal{R}_B
\end{equation}
which we call the \textbf{Reidemeister invariant} of the pair \(\ (f_1,f_2)\ \) of fiberwise maps (for details see the proof of theorem \ref{2.5} below).

This can also be interpreted in terms of the algebraic Reidemeister equivalence relation. In section \ref{sec2} we will recall that there is a bijection between the geometric Reidemeister set \(\ \pi_0\l(E_B(f_1,f_2)\r)\ \) and its algebraic counterpart, the orbit set of a certain action of \(\ \pi_1(M) \). In view of the exact sequence
\begin{equation}\label{1.15neu}
\xymatrix@=1.3cm{
 \ldots\ \ar[r]&\ \pi_1(F_M)\ \ar[r]^-{i_{M\ast}}&\ \pi_1(M)\ \ar[r]&\ \pi_1(B)\ \ar[r]&\ 0
}
\end{equation}
we may break this action up into two steps. First restrict it to the ''vertical`` action of \(\ i_{M\ast}\l(\pi_1(F_M)\r) \). The orbit set of this partial action corresponds to the geometric Reidemeister set \(\ \pi_0(E|)\ \) of the pair \(\ \l(f_1|F_M,f_2|F_M\r)\) (cf. \ref{1.10neu}). The remaining ''horizontal`` action of \(\ \pi_1(B)\ \) defines \(\ \varrho_B(f_1,f_2)\).

We will test the strength of our Reidemeister invariant in the setting of certain torus bundles.

\begin{defn}\label{1.13}
 A \textbf{linear {\boldmath \(k\)}-torus bundle} (\(k=1,2,\ldots\)) is a smooth locally trivial fiber bundle with typical fiber \(\ T^k=\R^k\!\diagup \Z^k\ \) and structure group \(\ \mathrm{GL}(k,\Z)\ \) (which acts in the standard fashion on \(\ T^k\)).
\end{defn}
The following result was proved in [Ko5].
\begin{thm}\label{1.14}
 Given integers \(\ m,n,b\geq 1\ \), let \(M\) (and \(N\), resp.) be a linear \(k\)-torus bundle with \(k=m\) (and \(k=n\), resp.) over the \(b\)-dimensional sphere \(\ S^b\). 

Then for all fiberwise maps \(\ f_1,\ f_2\ :\ M\ \to\ N\ \) we have
\begin{align*}
  &\mathrm{MCC}_B(f_1,f_2)\ = \ \mathrm{N}_B(f_1,f_2)\qquad\qquad\qquad\text{and}\\
 &\mathrm{MC}_B(f_1,f_2)\ = \begin{cases}
\mathrm{N}_B(f_1,f_2) &\text{if }\ \mathrm{N}_B(f_1,f_2)=0\ \text{ or }\ m+b=n;\\
\infty &\text{else}.
\end{cases}
\end{align*}
Thus these minimum numbers are determined by the Nielsen number \(\ \mathrm{N}_B(f_1,f_2)\).

In turn, \(\ \mathrm{N}_B(f_1,f_2)\ \) is determined by the Reidemeister invariant \(\ \varrho_B(f_1,f_2)\ \) as follows:
\[
 \mathrm{N}_B(f_1,f_2)\ =\ \nu_B\l(\varrho_B(f_1,f_2)\r)
\]
where
\[
 \nu_B(G,\beta):=\begin{cases}
\l(\nu_\mathrm{odd}'+\nu_\mathrm{even}'+\nu_\infty'\r)(G,\beta) &\text{if }\ \mathrm{rank}\ G\leq \mathrm{rank}\l(\pi_1(B)\r);\\
\ 0 &\text{else};
\end{cases}
\]
and \(\ \nu_\mathrm{odd}',\nu_\mathrm{even}',\nu_\infty'\ \) count the orbits of \(\beta\) of odd, even or infinite cardinality, resp., mod \(\infty\) (i.e. replacing \(\infty\) by \(0\) and leaving finite numbers unchanged).
\end{thm}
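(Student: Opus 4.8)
The plan is to reduce everything to integer linear algebra together with a twisted normal bordism computation over the base, and then to read off all three assertions.

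First I would normalize the maps. Since \(M\) and \(N\) are linear torus bundles over \(S^b\), for \(b\ge 2\) they are products \(S^b\times T^m\) and \(S^b\times T^n\) (the structure group \(\mathrm{GL}(k,\Z)\) is discrete, so the clutching data lives in \(\pi_{b-1}(\mathrm{GL}(k,\Z))=0\)), while for \(b=1\) they are mapping tori of monodromy matrices \(P\in\mathrm{GL}(m,\Z)\) and \(R\in\mathrm{GL}(n,\Z)\). Up to fiberwise homotopy every fiberwise map may be taken to be affine on each fiber with constant linear part, so the pair \((f_1,f_2)\) is encoded by integer matrices \(A_1,A_2\in\Z^{n\times m}\), translation vectors, and --- when \(b=1\) --- the compatibility relation \(A_iP=RA_i\) forced by the gluing. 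Writing \(A:=A_1-A_2\), the identification recalled before (1.11) gives \(\pi_0(E|)\cong G=\mathrm{Coker}(A)=\Z^n/A\Z^m\). Because \(AP=RA\) with \(P\) invertible, \(R\) maps \(A\Z^m\) onto itself and hence induces an automorphism \(\o R\) of \(G\); together with the translation data this yields the affine automorphism \(\beta(1)=\o R+c\) generating the \(\pi_1(S^1)=\Z\)-action, which is exactly \(\varrho_B(f_1,f_2)=[G,\beta]\). For \(b\ge2\) the action is trivial and the orbits are the single points of \(G\).

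Next I would settle the dimension bookkeeping, which governs both minimum numbers. Since \(f_1,f_2\) are fiberwise, the coincidence condition is fiberwise of codimension \(\dim F_N=n\), so a generic coincidence manifold has \(\dim C=\dim M-n=m+b-n\). This yields the \(\mathrm{MC}_B\)-dichotomy: if \(m+b=n\) the essential coincidences are isolated and, after a Wecken-type reduction, can be brought down to exactly one point per essential class, so \(\mathrm{MC}_B=\mathrm N_B\); if \(m+b>n\) an essential class carries a positive-dimensional coincidence set that cannot be dissolved, forcing \(\mathrm{MC}_B=\infty\) whenever \(\mathrm N_B\neq0\); and if \(m+b<n\) the rank computation below forces \(\mathrm N_B=0\), so all coincidences are removable. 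The equality \(\mathrm{MCC}_B=\mathrm N_B\) I would obtain from the Wecken property of these particular coincidence problems (torus maps crossed with \(S^b\) for \(b\ge2\), solvmanifold-type for \(b=1\)): vanishing of a direct summand of \(\w\omega_B\) is not only necessary but sufficient to dissolve the corresponding component, while each essential class is realizable by a single connected coincidence set. This reduces the whole theorem to identifying the essential orbits.

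The heart of the matter is the computation \(\mathrm N_B=\nu_B(G,\beta)\). Decomposing \(\w\omega_B(f_1,f_2)\) over \(\pi_0\l(E_B(f_1,f_2)\r)\), each Reidemeister class \(Q\) is exactly one orbit of \(\beta\), and \(p_M\circ\mathrm{pr}\,|:Q\to S^b\) exhibits \(Q\), for \(b=1\), as a single circle that is the \(\ell\)-fold cyclic cover of \(S^1\) attached to an orbit of cardinality \(\ell\). I would then evaluate the partial class \(\l[\,\w g^{-1}(Q),\w g|,\o g|\,\r]\in\Omega_{m+b-n}(Q;\w\varphi|Q)\) by summing the fiber contributions of the \(\ell\) fiber-Reidemeister classes in the orbit, transported around the loop. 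The free rank of \(G\) enters through the degree: the relevant summand of the normal bordism group is nonzero only when \(\dim C\) matches, which translates into the constraint \(\mathrm{rank}\,G\le\mathrm{rank}\l(\pi_1(B)\r)\) (so \(G\) finite for \(b\ge2\), \(\mathrm{rank}\,G\le1\) for \(b=1\)); outside this range every class is inessential and \(\nu_B=0\). Within the range the summed index is governed by the orientation behavior of \(\w\varphi|Q\) around the orbit, i.e. by how \(\o R\) acts on the relevant determinant line: in the untwisted case it is \(\Z\)-valued (the signs reinforce to \(\pm\ell\)), in the twisted case it collapses to \(\ell\bmod2\in\Z/2\). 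This is the source of the split into \(\nu'_{\mathrm{odd}},\nu'_{\mathrm{even}},\nu'_\infty\); the ``mod \(\infty\)'' convention records that whenever a given cardinality type occurs in infinitely many orbits it must be discarded, and it is precisely this mechanism --- a single surviving odd (fixed) orbit amid infinitely many even ones when \(\o R=-1\) on a free \(\Z\)-summand --- that gives odd orbits their special role.

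The main obstacle I anticipate is exactly this twisted-bordism bookkeeping: controlling the orientation of \(\w\varphi\) around each orbit-circle, proving that the summed index over an \(\ell\)-element orbit is a genuine integer count in the orientable case but degenerates to a parity in the non-orientable case, and then verifying that the resulting essentiality pattern matches the uniform three-fold count \(\nu'_{\mathrm{odd}}+\nu'_{\mathrm{even}}+\nu'_\infty\) across every possible action of \(\o R\) on the free and torsion parts of \(G\). Pinning down the precise essentiality criterion and checking the mod-\(\infty\) bookkeeping case by case is where the real work lies; the normalizations and dimension count of the first two paragraphs are comparatively formal.
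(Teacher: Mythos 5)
First, note that this paper offers no proof of Theorem \ref{1.14} at all: it is quoted from the preprint [Ko5], and sections \ref{sec3} and \ref{sec4} only use it as a black box. So your proposal can only be judged on its own merits. Its preparatory layer is sound and agrees with the paper's own normalizations: triviality of the bundles for \(b\ge 2\) (flat bundles over a simply connected base are trivial), the mapping-torus picture and the straightening of fiberwise maps to fiberwise affine ones for \(b=1\), and the identification of \(\varrho_B(f_1,f_2)\) with the pair \(\bigl(\mathrm{Coker}(A_1-A_2),\ \overline{R}+\mathrm{const}\bigr)\) --- this is exactly \ref{1.15}--\ref{1.18} and section \ref{sec2}. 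But each of the three substantive assertions of the theorem is then either assumed or deferred, so the proposal has genuine gaps precisely where the content lies.

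Concretely: (1) You obtain \(\mathrm{MCC}_B=\mathrm{N}_B\) by invoking ``the Wecken property of these particular coincidence problems''. That property --- vanishing of a \(\widetilde{\omega}_B\)-summand suffices to remove the corresponding Nielsen class \emph{by fiberwise homotopies}, and each essential class is realizable by one connected component --- is precisely the first assertion of the theorem; the citable Wecken-type results (Wecken, [BGZ], the stable-range theorem in [Ko1]) concern single pairs of maps, not homotopies through fiberwise maps over \(S^b\), so this step is circular. (2) Your justification of \(\mathrm{MC}_B=\infty\) (``an essential class carries a positive-dimensional coincidence set that cannot be dissolved'') rests on a principle that is false in general: in positive codimension an essential class can sometimes be concentrated in a \emph{single point}. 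For instance, for the suspension \(\Sigma\eta:S^4\to S^3\) of the Hopf map and \(f_2=\mathrm{const}\) one has \(\mathrm{N}=\mathrm{MCC}=1\) and \(\mathrm{MC}=1\), although the generic coincidence set is \(1\)-dimensional; concentration at a point is possible exactly because the class is a suspension --- the Hopf--Ganea phenomenon studied in [Ko3]. Hence the dimension count proves nothing by itself, and the dangerous cases are the degenerate ones, e.g.\ Subcase \(1+\) of section \ref{sec4}: there \(\det\overline{L}=0\), every single fiber pair has vanishing Nielsen number and can individually be made coincidence free, yet \(\mathrm{N}_B=|a|\,|c|\neq 0\) and the theorem asserts \(\mathrm{MC}_B=\infty\). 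A proof must use the torus structure: when \(G\) is finite, every fiber meets the coincidence set in at least \(\#G\) points (the paper's Case 0 remark), and in the degenerate cases one needs, e.g., that the essential \(\widetilde{\omega}_B\)-components are detected by \(H_1\) of their Reidemeister component \(Q\), which no coincidence set contained in finitely many small balls can realize. (3) The heart of the theorem, \(\mathrm{N}_B=\nu_B(\varrho_B(f_1,f_2))\), is exactly what you defer (``where the real work lies''): the untwisted-versus-twisted heuristic is not established and must reproduce, for example, that in Case 0 \emph{all} finite orbits, even and odd alike, are essential, while in Subcase \(1-\) only the odd ones are; moreover the mod-\(\infty\) rule is not a convention to be recorded but a statement to be proved --- the mechanism is that \(\mathrm{N}_B\le\mathrm{MCC}_B<\infty\) by compactness of \(M\), so infinitely many orbits of one cardinality type cannot all be essential, combined with an argument that orbits of the same type are simultaneously essential or inessential. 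In sum, your text is a reasonable road map of what a proof must contain, but at each of the theorem's three claims the critical step is missing.
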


This reduces the calculation of minimum numbers to distinguishing and counting orbits.

Next we describe the Reidemeister invariant in the case \(\ B=S^1\ \) which is of particular interest (for some details see section 4 in [Ko5]). Here \(N\) has the following form (up to an isomorphism of linear \(n\)-torus bundles):
\[
 N\ =\ \l(T^n\times [0,1]\r)\diagup(x,1)\ \sim\ \l(A_N(x),0\r)
\]
and \(\ p_N\ \) is the obvious projection to \(\ S^1\ =\ [0,1]\diagup 1\sim 0\ \); the gluing automorphism \(A_N\) is induced by the action of some matrix \(\ \o{A}_N\ \in\ \mathrm{GL}(n,\Z)\ \) on \(\ \l(\R^n,\Z^n\r)\ \). Similarly \(M\) has a gluing map given by \(\ \o{A}_M\ \in\ \mathrm{GL}(m,\Z)\ \).

Any linear map \(\ \o{L}\ :\ \Z^m\to\Z^n\ \) such that \(\ \o{L}\circ\o{A}_M\ =\ \o{A}_N\circ\o{L}\ \) induces a map \(\ L:T^m\to T^n\ \) which is compatible with the gluings. Moreover any integer vector \(\ \o{v}\in\Z^n\ \) determines a linear path \(\o{\gamma}_{\o{v}}\ :\ [0,1]\to\R^n\ \) from \(0\) to \(\o{v}\) and hence a loop \(\ \gamma_{\o{v}}\ \) in \(T^n\). Thus the map
\[
 T^m\times [0,1]\ \to\ T^n\times[0,1],\quad (x,t)\ \longrightarrow\ \l(Lx+\gamma_{\o{v}}(t),t\r)
\]
induces a fiberwise map
\begin{equation}\label{1.15}
  f_{L,\o{v}}\ :\ M\ \longrightarrow\ N.
\end{equation}
It is not hard to see that each fiberwise map \(f\) has this form after a suitable fiberwise homotopy (''straightening``). Actually \(L\) is determined by the restriction of \(f\) to a single fiber:
\[
 \o{L}\ =\ f|_\ast\ :\ \Z^m=H_1(F_M;\Z)\ \longrightarrow\ H_1(F_N;\Z)=\Z^n;
\]
in turn, \(\ \o{L}\ \) determines the homotopy class of \(\ f|\, :\, F_M\ \longrightarrow \ F_N.\)
The class \(\ [\o{v}]\in\Z^n\diagup\l(\o{A}_N-\mathrm{id}\r)\l(\Z^n\r)\ \) of the ''slope vector`` \(\ \o{v}\ \) characterizes \(\ f\circ(\text{zero section of }p_M)\ \) up to homotopy of sections in \(N\).

Given any pair \(\ (f_1,f_2)\ \) of fiberwise maps from \(M\) to \(N\), we may find homotopies \(\ f_i\ \sim_B\ f_{L_i,\o{v}_i}\ \), \(\ i=1,2\). Put
\begin{equation}\label{1.17}
 \o{L}\ := \ \o{L}_1-\o{L}_2 \quad \text{and}\quad \o{v}\ := \ \o{v}_1-\o{v}_2\in\Z^n.
\end{equation}
Then the affine automorphism \(\ \o{u}\ \longrightarrow\ \o{A}_N (\o{u}-\o{v})\ \) on \(\ \Z^n\ \) induces an affine automorphism (and hence a \(\ \l(\pi_1\l(S^1\r) =\Z\r)\ \)-action) \(\ \beta\ \) on \(\ G\ :=\ \Z^n\diagup\o{L}\l(\Z^m\r)\). We obtain the Reidemeister invariant
\begin{equation}\label{1.18}
 \varrho_B(f_1,f_2)\ =\ \l[(G,\beta)\r].
\end{equation}

\begin{rem}\label{1.19}
 Note that fiberwise maps \(\ f_1,f_2,f_3,\ldots\ \) into a linear \(n\)-torus bundle can be added and subtracted fiberwise. Clearly the pairs \(\ (f_1,f_2)\ \) and \(\ (f_1-f_3, f_2-f_3)\ \) have the same coincidence behavior. This is reflected in the formulae above. Thus we need to study only pairs consisting of \(\ f=f_{L,\o{v}}\ \sim_B\ f_1-f_2\ \) and \(\ f_{o,\o{o}}\ =\ (\text{zero section of }N)\circ p_M\ \sim_B\ f_2-f_2\).

If in particular \(\ M=N\ \) then the coincidences of \(\ (f_1,f_2)\ \) are just the fixed points of \(\ f_1-f_2+\mathrm{id}\).\(\hfill\Box\) 
\end{rem}

The problem of computing the order and the number of orbits can involve a wide variety of interesting algebraic phenomena. It turns out that odd order orbits play a special role.

\begin{ex}\label{1.20}
 Let \(\ M=N\ \) be the 2-torus bundle with gluing matrix \(\begin{pmatrix} 0& 1\\1& 0\end{pmatrix}\) which switches the coordinates in \(\ \R^2\). Given a pair \(\ (f_1,f_2)\ \) of fiberwise selfmaps we proceed as in \ref{1.17}. Then \(\ \o{L}=\begin{pmatrix}a& b\\ b &a\end{pmatrix}\ \) for some integers \(\ a,b\). Let \(\ c\ \) denote the sum of the two coordinates of the integer vector \(\ \o{v}\in\Z^2\). Then the triple \(\ (a,b,c)\in\Z^3\ \) characterizes the fiberwise homotopy class of \(\ f_1-f_2\ \sim\ f_{L,\o{v}}\).
\end{ex}
\begin{thm}\label{1.21}
 Define \(\ d=\mathrm{gcd}\,(a+b,c)\ \) to be the greatest common denominator of \(\ a+b\ \) and \(\ c\). Then 
\[
 \mathrm{MCC}_B(f_1,f_2)=\mathrm{N}_B(f_1,f_2)=\frac{d}{2}\cdot\begin{cases}
|a-b|& \text{if } a+b \text{ is an even multiple of }  d;\\
(|a-b|+1)& \text{if } a+b \text{ is an odd multiple of }  d.
\end{cases}
\]
\end{thm}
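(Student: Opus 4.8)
The plan is to deduce everything from Theorem~\ref{1.14} (applied with $b=1$), which already gives $\mathrm{MCC}_B(f_1,f_2)=\mathrm{N}_B(f_1,f_2)=\nu_B(\varrho_B(f_1,f_2))$; so the entire task is to read off the orbit structure of the $\Z=\pi_1(S^1)$-action encoded by $\varrho_B$. Following the $B=S^1$ description, $\varrho_B=[(G,\beta)]$ with $G=\Z^2/\o L\,\Z^2$, $\o L=\o L_1-\o L_2$, and $\beta$ generated by the affine automorphism $\o u\mapsto \o A_N(\o u-\o v)$, where $\o A_N=\begin{pmatrix}0&1\\1&0\end{pmatrix}$ and $\o v=\o v_1-\o v_2$. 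The relation $\o L\,\o A_N=\o A_N\,\o L$ forces $\o L=\begin{pmatrix}a&b\\b&a\end{pmatrix}$ as in Example~\ref{1.20}, and $c$ is the coordinate sum of $\o v$.

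The first genuine step is to understand $\beta$. Since $\o A_N^2=\mathrm{id}$ and $\o A_N$ fixes $(1,1)$, a direct computation gives $\beta^2(g)=g-w$ with $w:=(c,c)$; thus $\beta^2$ is translation by $-w$. Writing $t$ for the order of $w$ in $G$, I would show that each $\beta$-orbit is either a single coset of $\langle w\rangle$ (of size $t$), occurring precisely when that coset is fixed by the involution $\o\beta$ induced by $\beta$ on $G/\langle w\rangle$, or a union of two cosets (of size $2t$) otherwise. Consequently, with $N:=|G/\langle w\rangle|$ and $\phi:=\#\mathrm{Fix}(\o\beta)$, in the case $\det\o L\neq0$ (so $G$ is finite) the size-$2t$ orbits are even and the size-$t$ orbits have the parity of $t$; since $\nu_B=\nu'_{\mathrm{odd}}+\nu'_{\mathrm{even}}+\nu'_\infty$ this yields, irrespective of the parity of $t$,
\[\mathrm{N}_B(f_1,f_2)=\phi+\tfrac{1}{2}(N-\phi)=\tfrac{1}{2}(N+\phi).\]
Next I would compute $t$ and $N$ by diagonalising $\o L$ along the $\o A_N$-eigenvectors: $(1,1)$ has order $|a+b|$ in $G$, whence $t=|a+b|/d$ and $N=|G|/t=d\,|a-b|$; in particular the parity of $t$ equals that of $(a+b)/d$.

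The heart of the argument, and the step I expect to be the main obstacle, is the evaluation of the fixed-point number $\phi$ of the affine involution $\o\beta$ on $A:=G/\langle w\rangle$. Fixed points solve $(\o A_N-\mathrm{id})\,\o g=\o{\o A_N\o v}$, an affine equation whose solution set is empty or a coset of $\ker(\o A_N-\mathrm{id})$. Passing to the eigen-coordinates $\Phi(x,y)=(x+y,x-y)$ I would identify the sublattice $H=\o L\,\Z^2+\Z w$ as $\Phi(H)=\langle(a+b,a-b),(2d,0),(0,2(a-b))\rangle$, compute the order of the image of $(1,-1)$, and reduce solvability to two congruences mod $2$ in an auxiliary integer $\alpha$, namely $\alpha\,\widehat P\equiv\widehat c$ and $(a-b)\alpha\equiv e$, where $\widehat P=(a+b)/d$, $\widehat c=c/d$ and $e$ is the coordinate difference of $\o v$. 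The delicate bookkeeping exploits three elementary facts: $a+b\equiv a-b\ (\mathrm{mod}\ 2)$, $e\equiv c\ (\mathrm{mod}\ 2)$ (both because the two entries of any integer vector have fixed parity sum and difference), and $\gcd(\widehat P,\widehat c)=1$. These force the dichotomy: if $t$ (equivalently $\widehat P$) is odd the two congruences are compatible and $\phi=N/|a-b|=d$; if $t$ is even then $\widehat P$ even together with $\gcd(\widehat P,\widehat c)=1$ makes $\widehat c$ odd, so the first congruence has no solution and $\phi=0$. This is exactly the point at which the odd-order orbits---the size-$t$ orbits counted by $\phi$---produce the extra summand.

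Assembling $\mathrm{N}_B=\tfrac12(d\,|a-b|+\phi)$ with $\phi\in\{0,d\}$ then gives the stated two-case formula, the ``$+1$'' appearing precisely when $(a+b)/d$ is odd. Finally I would treat the degenerate cases $a=\pm b$ (where $\det\o L=0$ and $\mathrm{rank}\,G=1$) separately: here $G$ is infinite, yet $G/\langle w\rangle$ is still finite (or else all orbits become infinite), and the ``mod $\infty$'' convention discards the infinitely many even orbits while the finitely many odd (fixed-point) orbits survive; re-running the same order-$t$ and fixed-point computations---now with the conventions $|a-b|=0$ or $d=\gcd(0,c)$---reproduces the formula, including the fully degenerate case $a=b=0$, where $\mathrm{rank}\,G=2>\mathrm{rank}\,\pi_1(B)$ forces $\mathrm{N}_B=0=\tfrac d2|a-b|$.
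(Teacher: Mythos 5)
Your argument for the generic case $|a|\neq|b|$ is correct, and it takes a genuinely different route from the paper. The paper feeds the data into its general orbit-counting theorem \ref{3.1} (condition (*), the order $q_0$, the subgroups $\ker(\alpha\pm\mathrm{id})$), verifying its hypotheses via the exact sequence \ref{4.2} and a complex-conjugation trick showing $(\alpha-\mathrm{id})(G)=\ker(\alpha+\mathrm{id})$. You instead observe that for finite $G$ the quantity $\nu_B$ is simply the \emph{total} number of orbits, decompose every orbit into one or two cosets of $\langle w\rangle$, $w=(c,c)$, and compute $\nu_B=\frac{1}{2}(N+\phi)$ with $N=\#\bigl(G/\langle w\rangle\bigr)=d\,|a-b|$ and $\phi=\#\mathrm{Fix}(\overline{\beta})$ evaluated by an explicit lattice computation. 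I checked the key steps: $\Phi(H)=\mathbb{Z}(a+b,a-b)+\mathbb{Z}(2d,0)+\mathbb{Z}(0,2(a-b))$ is correct, the two mod-$2$ congruences are the right solvability criterion, the parity facts do force $\phi=d$ when $(a+b)/d$ is odd (the image of $(1,-1)$ in $G/\langle w\rangle$ having order $|a-b|$) and $\phi=0$ when it is even, and $\frac{1}{2}(d\,|a-b|+\phi)$ reproduces both branches of the stated formula. This is more self-contained than the paper's route; what it forgoes is knowing \emph{which} orbits are odd, which theorem \ref{1.21} does not need in the finite case (though theorem \ref{1.22} does).

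The genuine gap is in the degenerate cases $a=\pm b\neq 0$, which the paper treats as separate Subcases $1+$ and $1-$ with their own computations. Your structural claim --- ``$G/\langle w\rangle$ is still finite (or else all orbits become infinite)'' --- is false precisely in the subcase $a=b\neq 0$: there $w=(c,c)$ is parallel to the single relation $(a,a)$, so $\langle w\rangle$ is finite of order $t=|a|/\gcd(a,c)$ while $G/\langle w\rangle$ is \emph{infinite}, and all orbits are finite (of size $t$ or $2t$) but infinitely many. This is exactly the case that produces the theorem's nonzero ``odd multiple'' value at $|a-b|=0$, and it cannot be obtained by re-running your Case-0 formulas: $(N+\phi)/2$ is meaningless since $N=\infty$; the correct count is $\nu'_{\mathrm{odd}}=\phi$ (the infinitely many even orbits are discarded mod $\infty$, and one must also verify that $\phi$ is finite), and here $\phi=\gcd(a,c)=d/2$, \emph{not} $d$, because $d=\gcd(2a,c)$ equals $2\gcd(a,c)$ exactly when $t$ is odd. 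As written, your dichotomy would classify this subcase as having only infinite orbits and hence would output $0$ instead of $d/2$. (In the other subcase $a=-b\neq0$, $c\neq 0$, all orbits really are infinite and the count is $\nu'_{\infty}=N/2=|a||c|$ --- but note that the surviving orbits there are infinite ones, not the ``finitely many odd (fixed-point) orbits'' your sentence asserts; indeed $\phi=0$ there.) So your machinery does extend to the degenerate cases, but the extension is not routine, it is absent from the write-up, and the one structural statement you make about it is wrong.
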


The proof is given in section \ref{sec4} below. It involves studying three distinct algebraic cases which arise from very different geometric situations. Thus the homogeneity of the final result comes somewhat as a surprise.

Next let us see what happens when we compose \(f_1\) and \(f_2\) with the inclusion map
\[
 \mathrm{in}\ :\ M\ \longrightarrow\ M\ \times_B\ N' 
\]
into the fiberwise product of \(N\) with the Klein bottle \(K\) or the 2-torus \(T\) (fibered in the standard fashion over \(\ B=S^1\)). It turns out that only Reidemeister classes which correspond to odd order orbits can possibly remain essential. All other Nielsen classes can be made empty by homotopies of \(\ \mathrm{in}\circ{f}_1\ \) and \(\ \mathrm{in}\circ{f}_2\ \) which exploit the extra space available in \(\ M\,\times_B\, N'\).

\begin{thm}\label{1.22}
 Given fiberwise maps \(\ f_i\ :\ M\ \longrightarrow\ M\ ,\ i=1,2,\ \) as in \ref{1.21}, consider 
\[
 \hat{f}_i\ =\ \mathrm{in}\circ f_i\ :\ M\ \longrightarrow\ M\ \times_B\ N'
\]
where \(\ N'=K\ \) or \(T\). Put \(\ d=\mathrm{gcd}\,(a+b,c)\).

If the extra factor \(N'\) is the Klein bottle and \(\ |a|\neq|b|\), then
\[
 \mathrm{MCC}_B\l(\hat{f}_1,\hat{f}_2\r)=\mathrm{MC}_B\l(\hat{f}_1,\hat{f}_2\r) = \begin{cases}
d& \text{if }\ (a+b)/d\ \text{ is odd};\\
0& \text{else}.
     \end{cases}
\]
In all other cases these minimum numbers vanish.
\end{thm}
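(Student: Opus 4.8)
The plan is to put the composite pair $(\hat f_1,\hat f_2)$ back into the torus-bundle framework of Theorem \ref{1.14} and then read off $\mathrm N_B$ from the orbit structure of a single affine $\Z$-action. First I note that $M\times_B N'$ is itself a linear $3$-torus bundle over $S^1$: since $K$ and $T$ are the linear $1$-torus bundles over $S^1$ with gluing $-1$ and $+1$, the gluing matrix of $M\times_B N'$ is the block sum of $\o A_N$ with $(\mp1)$. The map $\mathrm{in}$ is the fiberwise zero section in the $N'$-direction, so after straightening (cf. \ref{1.15}) each $\hat f_i$ has linear part $\binom{\o L_i}{0}$ and slope $\binom{\o v_i}{0}$. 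Applying Theorem \ref{1.14} with $(m,n,b)=(2,3,1)$, and observing $m+b=3=n$, I get $\mathrm{MC}_B(\hat f_1,\hat f_2)=\mathrm{MCC}_B(\hat f_1,\hat f_2)=\mathrm N_B(\hat f_1,\hat f_2)=\nu_B(\hat\varrho_B)$; this already explains why the two minimum numbers coincide throughout the statement.

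Next I compute $\hat\varrho_B=[(\hat G,\hat\beta)]$ from \ref{1.17}--\ref{1.18}. As all the data are block-diagonal, $\hat G=G\oplus\Z$ and $\hat\beta=\beta\oplus\beta'$, where $(G,\beta)$ is exactly the pair of Theorem \ref{1.21} and $\beta'$ is the identity of $\Z$ (case $N'=T$) or $w\mapsto-w$ (case $N'=K$). Since $\mathrm{rank}\,\hat G=\mathrm{rank}\,G+1$ and $\det\o L=a^2-b^2$, the nonzero branch of $\nu_B$ is excluded as soon as $|a|=|b|$: then $\mathrm{rank}\,G\ge1$, so $\mathrm{rank}\,\hat G\ge2>1=\mathrm{rank}\,\pi_1(S^1)$ and $\nu_B=0$. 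This disposes of every "other case'' with $|a|=|b|$. So assume $|a|\neq|b|$, whence $G$ is finite of order $|a-b|\,|a+b|$ and $\mathrm{rank}\,\hat G=1$.

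Now I analyse the orbits of $\hat\beta$. For $N'=T$ every orbit is finite and of the form $(\beta\text{-orbit})\times\{w\}$; as $w$ ranges over $\Z$ the number of orbits of any given cardinality is $0$ or $\infty$, so $\nu'_{\mathrm{odd}},\nu'_{\mathrm{even}},\nu'_\infty$ all vanish mod $\infty$ and $\nu_B=0$. For $N'=K$ an orbit through $(g,w)$ with $w\neq0$ can only return $w$ to itself after an even power of $\hat\beta$, so all such orbits have even cardinality, and there are infinitely many of them, contributing $0$ mod $\infty$ to $\nu'_{\mathrm{even}}$ and nothing to $\nu'_{\mathrm{odd}}$; $\nu'_\infty=0$ as $G$ is finite. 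The orbits with $w=0$ are exactly $(\beta\text{-orbit})\times\{0\}$, so $\nu_B(\hat G,\hat\beta)=\nu'_{\mathrm{odd}}(G,\beta)$, the number of odd-length $\beta$-orbits on $G$.

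It remains to count these. Let $e_\pm=(1,\pm1)$ and let $\xi,\eta$ be the classes of $e_+,e_-$ in $G$; put $\delta=c\,\xi$. From $\o A_N^2=\mathrm{id}$ one computes $\beta^2=$ translation by $-\delta$, with $\o A_N\delta=\delta$, so $\beta^{2\ell+1}g=\beta g-\ell\delta$. Hence $g$ lies in an odd orbit iff $\beta g-g\in\langle\delta\rangle$, which, written out on a representative, amounts to solving $(a+b)\mid(2j+1)c$ in odd integers $2j+1$ --- possible exactly when $p:=(a+b)/d$ is odd. A lattice computation using $\o L e_\pm=(a\pm b)e_\pm$ gives $\mathrm{ord}_G(\xi)=|a+b|$, hence $\mathrm{ord}_G(\delta)=|a+b|/d=p$ and $\mathrm{ord}(\beta)=2p$; likewise $\mathrm{ord}_G(\eta)=|a-b|$, so $\mathrm{Im}(\o A_N-\mathrm{id})=\langle\eta\rangle$ has order $|a-b|$ and $\ker(\o A_N-\mathrm{id})$ has order $|a+b|$. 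When $p$ is even, $\nu'_{\mathrm{odd}}=0$. When $p$ is odd, the locus $S$ of odd-orbit points is a coset of $\ker(\o A_N-\mathrm{id})$, of cardinality $|a+b|$, and on $S$ the difference $\beta g-g$ is constant (being invariant under $\ker(\o A_N-\mathrm{id})$); thus $\beta|_S$ is a single translation whose order divides $p$ but is divisible by $p$, since the sum-of-coordinates map $G\twoheadrightarrow\Z/(a+b)$ intertwines $\beta$ with translation by $-c$, of order $p$. Therefore every odd orbit has length exactly $p$ and $\nu'_{\mathrm{odd}}=|a+b|/p=d$, which is the asserted value. The hard part is precisely this last paragraph: obtaining the exact count rather than a mod-$\infty$ collapse, which hinges on the two order computations $\mathrm{ord}_G(\xi)=|a+b|$ and the common odd-orbit length $p$ --- the lower bound $p\mid(\text{orbit length})$ coming from the sum-of-coordinates homomorphism and the upper bound from the coset description of $S$.
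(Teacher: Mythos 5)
Your overall route is the same as the paper's: you compute \(\varrho_B(\hat f_1,\hat f_2)=[(G\oplus\Z,\,\beta\oplus(\pm\mathrm{id}))]\), use Theorem \ref{1.14} to dispose of all cases with \(\mathrm{rank}(G\oplus\Z)>1\) (i.e. \(|a|=|b|\)), observe that for \(N'=T\) every orbit recurs at all \(\Z\)-levels so the mod-\(\infty\) counts vanish, and that for \(N'=K\) all orbits off the level \(G\oplus\{0\}\) have even order and are infinite in number, so only the odd \(\beta\)-orbits in \(G\oplus\{0\}\) can survive. (Your explicit remark that \(m+b=2+1=3=n\) forces \(\mathrm{MC}_B=\mathrm{MCC}_B=\mathrm{N}_B\) makes precise a point the paper leaves implicit.) You depart from the paper only in the final count: the paper simply invokes ``the methods of Case 0'', i.e. Theorem \ref{3.1} together with the facts established in Case 0 of the proof of Theorem \ref{1.21} (namely \(q_0=|a+b|/d\), \((\alpha-\mathrm{id})(G)=\mathrm{ker}\,(\alpha+\mathrm{id})\), \(\#\mathrm{ker}\,(\alpha-\mathrm{id})=|a+b|\)), which yields \(\nu_{\mathrm{odd}}=d\) exactly when \(q_0\) is odd. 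You instead redo the count by hand; your sum-of-coordinates homomorphism \(\Sigma\colon G\to\Z_{|a+b|}\), giving \(p\mid(\text{orbit length})\), is a nice substitute for the implication chain (iii)\(\Rightarrow\)(v) in the proof of Theorem \ref{3.1}.

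There is, however, one genuine gap in that hand count: you \emph{assert} that the locus \(S\) of odd-orbit points is a \emph{single} coset of \(\mathrm{ker}\,(\alpha-\mathrm{id})\), and the answer depends on this. A priori \(S=(\alpha-\mathrm{id})^{-1}\bigl(\langle\delta\rangle-w\bigr)\) is a union of \(\#\bigl(\langle\delta\rangle\cap\mathrm{Im}\,(\alpha-\mathrm{id})\bigr)\) such cosets, and \(k\) cosets would give the count \(kd\) rather than \(d\); your parenthetical argument only shows that \(\beta g-g\) is constant on \emph{each} coset, not that there is only one. The claim is true, and you already have the tools to prove it, in either of two ways. (a) Note \(\langle\delta\rangle\subset\mathrm{ker}\,(\alpha-\mathrm{id})\) while \(\mathrm{Im}\,(\alpha-\mathrm{id})\subset\mathrm{ker}\,(\alpha+\mathrm{id})\), so every element of \(\langle\delta\rangle\cap\mathrm{Im}\,(\alpha-\mathrm{id})\) is \(2\)-torsion; since \(\langle\delta\rangle\) has odd order \(p\), the intersection is trivial --- note that this is exactly where the hypothesis ``\(p\) odd'' enters. (b) Reverse your logic: first prove that every odd orbit has length exactly \(p\) (odd lengths divide \(2p\) because \(\beta^{2p}=\mathrm{id}\), hence divide \(p\); and \(p\) divides them by your \(\Sigma\)-argument), so that \(S=\mathrm{Fix}(\beta^{p})\); since \(p\) is odd, \(\beta^{p}(u)=\alpha(u)+\mathrm{const}\), so \(\mathrm{Fix}(\beta^p)\) is the solution set of the single affine equation \((\alpha-\mathrm{id})(u)=-\mathrm{const}\), hence empty or one coset --- this is precisely how the proof of Theorem \ref{3.1} handles the point. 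A last, harmless, inaccuracy: \(\mathrm{ord}(\beta)=2p\) can fail (e.g. if \(|a-b|=1\) then \(\alpha=\mathrm{id}\) on \(G\) and \(\mathrm{ord}(\beta)=p\)); what you need, and all you use, is \(\beta^{2p}=\mathrm{id}\).
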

The proofs of theorems \ref{1.21} and \ref{1.22} will be given in section \ref{sec4} below.
\section{The Reidemeister invariant {\boldmath\(\ \varrho_B(f_1,f_2)\)}}\label{sec2}
Given fiberwise maps \(\ f_1,f_2\,:\,M\ \longrightarrow\ N\ \) as in diagram \ref{1.2}, we discuss in this section several group actions whose orbit sets can be identified with the Reidemeister set \(\ \mathrm{R}_B(f_1,f_2)\ =\ \pi_0\l(E_B(f_1,f_2)\r)\ \) (cf. \ref{1.4} and \ref{1.6}).

Pick a point \(\ x_0\in F_M\ \) (cf. \ref{1.2}) and let \(\ \pi_{x_0} \ \) denote the set of homotopy classes (with endpoints kept fixed) of paths
\[
 \theta\ :\ \l([0,1],0,1\r)\ \longrightarrow\ \l(F_N,f_1(x_0),f_2(x_0)\r).
\]
Using homotopy lifting extension properties (compare [Wh], I.7.16) of the (Serre) fibration \(\ p_N\ \) we construct a welldefined action
\begin{equation}\label{2.1}
 \ast_B\ :\ \pi_1(M,x_0)\times \pi_{x_0}\ \longrightarrow \ \pi_{x_0}
\end{equation}
of the fundamtental group \(\ \pi_1(M,x_0)\ \) on the set \(\ \pi_{x_0}\ \) as follows (compare section 3 in [GK]). Given classes \(\ [\gamma]\ \in\ \pi_1(M,x_0)\ \) and \(\ [\theta]\ \in\ \pi_{x_0}\ \), lift the homotopy
\[
 h\ :\ [0,1]\times[0,1]\ \longrightarrow \ B,\quad h(s,t)\ =\ p_M\circ\gamma(s),
\]
to a map \(\ \w{h}\ :\ [0,1]\times[0,1]\ \longrightarrow\ N\ \) such that 
\[
 \w{h}(0,t)=\theta(t),\quad \w{h}(s,0)=f_1\circ\gamma(s),\quad \w{h}(s,1)=f_2\circ\gamma(s)
\]
for all \(\ s,t\in[0,1]\ \). Then the path \(\ \theta'\ \) defined by \(\ \theta'(t)\ := \ \w{h}(1,t)\ \) lies entirely in \(\ F_N\ \). We define
\[
 [\gamma]\ \ast_B\ [\theta]\ :=\ [\theta'].
\]

Clearly, \(\ \gamma\ \) and \(\ \w{h}\ \) together describe a path in \(\ \mathrm{E}_B(f_1,f_2)\ \) which joins \(\ (x_0,\theta)\ \) to \(\ (x_0,\theta')\ \) (since \(\ \l(\gamma(s),\ \w{h}(s,-)\r)\ \) is a point in \(\ \mathrm{E}_B(f_1,f_2)\ \) for every \(\ s\in[0,1]\ \)). Now note that \(\ \pi_{x_0}\ \approx\ \pi_0\l(\mathrm{pr}^{-1}(\{x_0\})\r)\ \) is just the set of pathcomponents of the fiber \(\ \mathrm{pr}^{-1}(\{x_0\})\ \subset\ \mathrm{E}_B(f_1,f_2)\ \) (cf. \ref{1.4}).
Thus two such pathcomponents belong to the same orbit of the \(\ \pi_1(M,x_0)\,-\,\)action if and only if they lie in the same pathcomponent of \(\ \mathrm{E}_B(f_1,f_2)\). We obtain (compare [GK], thm. 3.1)
\begin{thm}\label{2.2}
 For every pair \(\ f_1,f_2\ :\ M\ \longrightarrow\ N\ \) of fiberwise maps and for every choice \(\ x_0\in F_M\ \) of a basepoint there is a canonical bijection between the geometric Reidemeister set \(\ \pi_0\l(\mathrm{E}_B(f_1,f_2)\r) \ \) and the orbit set of the group action \(\ \ast_B\ \) of \( \ \pi_1(M,x_0)\ \) on the set \(\ \pi_{x_0}\ \).
\end{thm}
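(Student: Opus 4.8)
The plan is to exhibit the bijection as the map induced on orbit sets by the natural assignment $[\theta]\mapsto [(x_0,\theta)]$, which sends a path class to the pathcomponent of the corresponding point of $\mathrm{E}_B(f_1,f_2)$, and then to check that it is well-defined, surjective and injective. Under the identification $\pi_{x_0}\approx\pi_0\l(\mathrm{pr}^{-1}(\{x_0\})\r)$ recorded above, this assignment defines a map $\Phi:\pi_{x_0}\to\pi_0\l(\mathrm{E}_B(f_1,f_2)\r)$ (well-defined because paths homotopic rel endpoints in $F_N$ give points in one pathcomponent of the fiber). The observation preceding the theorem --- that $\gamma$ together with the lifted homotopy $\w{h}$ traces out a path in $\mathrm{E}_B(f_1,f_2)$ from $(x_0,\theta)$ to $(x_0,\theta')$ --- shows precisely that $\Phi$ takes the same value on $[\theta]$ and on $[\gamma]\ast_B[\theta]$. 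Hence $\Phi$ is constant on the orbits of $\ast_B$ and descends to a map $\o\Phi$ from the orbit set to $\pi_0\l(\mathrm{E}_B(f_1,f_2)\r)$.

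For surjectivity I would use that $M$ is connected, hence pathconnected, and that $\mathrm{pr}$ is a Hurewicz fibration. Given any $(x,\theta)\in\mathrm{E}_B(f_1,f_2)$, choose a path $\alpha$ in $M$ from $x$ to $x_0$ and lift it through $\mathrm{pr}$ to a path in $\mathrm{E}_B(f_1,f_2)$ starting at $(x,\theta)$; its endpoint lies in $\mathrm{pr}^{-1}(\{x_0\})$ and in the same pathcomponent of $\mathrm{E}_B(f_1,f_2)$ as $(x,\theta)$. Thus every pathcomponent already meets the fiber over $x_0$, so $\o\Phi$ is onto.

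For injectivity, suppose $(x_0,\theta_1)$ and $(x_0,\theta_2)$ lie in one pathcomponent of $\mathrm{E}_B(f_1,f_2)$, joined by a path $\Gamma$. Then $\gamma:=\mathrm{pr}\circ\Gamma$ is a loop in $M$ at $x_0$, and $\Gamma$ is a lift of $\gamma$ through $\mathrm{pr}$ starting at $(x_0,\theta_1)$ and ending at $(x_0,\theta_2)$. On the other hand the construction of the action in \ref{2.1} produces the path $s\mapsto\l(\gamma(s),\w{h}(s,-)\r)$, which is itself a lift of the \emph{same} loop $\gamma$ through $\mathrm{pr}$ starting at $(x_0,\theta_1)$, and whose endpoint represents $[\gamma]\ast_B[\theta_1]$. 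The decisive point is therefore that the endpoint of a lift of a given path, starting at a prescribed point of the fiber, is determined up to pathcomponent of the terminal fiber by the homotopy class of the path rel endpoints; this is the standard monodromy (fiber-transport) statement for a fibration and follows from the homotopy lifting property of $\mathrm{pr}$. Granting it, $[\theta_2]=[\gamma]\ast_B[\theta_1]$, so $[\theta_1]$ and $[\theta_2]$ lie in a common orbit and $\o\Phi$ is injective.

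The step I expect to require the most care is this monodromy statement: one must verify that two lifts of $\gamma$ with the same initial point have endpoints in a common pathcomponent of the fiber, and that the induced transport depends only on $[\gamma]\in\pi_1(M,x_0)$ --- equivalently, that the geometric fiber transport through $\mathrm{pr}$ coincides with the homotopy-theoretic action $\ast_B$. Once the homotopy lifting property of the Hurewicz fibration $\mathrm{pr}$ is invoked to supply well-definedness both of $\ast_B$ and of fiber transport, the remaining verifications are formal, and the resulting bijection $\o\Phi$ is canonical once $x_0$ is fixed.
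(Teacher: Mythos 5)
Your proof is correct and takes essentially the same route as the paper, whose argument consists of the observation that the construction of \(\ast_B\) traces out a path in \(\mathrm{E}_B(f_1,f_2)\) from \((x_0,\theta)\) to \((x_0,\theta')\), the identification \(\pi_{x_0}\approx\pi_0\left(\mathrm{pr}^{-1}(\{x_0\})\right)\), and the resulting claim that two fiber components lie in the same orbit if and only if they lie in the same pathcomponent of \(\mathrm{E}_B(f_1,f_2)\) (with a reference to [GK], Thm.\ 3.1). Your write-up merely makes explicit the two steps the paper leaves implicit: surjectivity via path lifting over the connected base \(M\), and injectivity via the standard monodromy statement for the Hurewicz fibration \(\mathrm{pr}\).
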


\begin{ex}\label{2.3}
 If \(\ B\ \) consists of a single point and \(\ f_1(x_0)=f_2(x_0)=:y_0\ \), then \(\ \pi_{x_0}\ =\ \pi_1(N,y_0)\ \) and \[
\quad\l[\gamma\r]\ \ast_B\ \l[\theta\r]\ =\ f_{1\ast}\l(\l[\gamma\r]\r)^{-1}\cdot\l[\theta\r]\cdot f_{2\ast}\l(\l[\gamma\r]\r)
\]
for all \(\ \l[\gamma\r]\ \in\ \pi_1(M,x_0)\ \) and \(\ [\theta]\in\pi_1(N,y_0)\ \); thus the orbit set of our group action agrees with the standard (''algebraic``) Reidemeister set. \(\hfill\Box\)
\end{ex}

Next we simplify the algebraic description of the Reidemeister set while preserving the orbit structure approach at least partially. Note that \(\ i_{M\ast}\l(\pi_1(F_M,x_0)\r)\ \) is a \textit{normal} subgroup of \(\ \pi_1(M,x_0)\ \) (cf. \ref{1.2} and \ref{1.15neu}). Therefore \(\ \ast_B\ \) induces a welldefined group action of \(\ \pi_1(B,\ast)\ \cong\ \pi_1(M,x_0)\diagup i_{M\ast}\l(\pi_1(F_M,x_0)\r)\ \) on the orbit set of the action of \(\ i_{M\ast}\l(\pi_1(F_M,x_0)\r)\ \) on \(\ \pi_{x_0}\ \) (obtained by restricting \(\ \ast_B\ \)). But this orbit set can be identified with \(\ \pi_0\l(E|\r)\ \) (cf. \ref{1.11neu}), in analogy to theorem \ref{2.2} (compare [Ko1], 2.1). We obtain the interpretation of the geometric Reidemeister set \(\ \pi_0\l(\mathrm{E}_B(f_1,f_2)\r)\ \) in terms of the \(\ \pi_1(B,\ast)\,-\,\)action on \(\ \pi_0\l(E|\r)\ \) as described in the introduction (cf. \ref{1.11neu}).

Now assume that the fibers \(\ F_M\ \) and \(\ F_N\ \) are connected and that \(\ \pi_1(F_N)\ \) is abelian. We will endow \(\ \pi_0\l(E|\r)\ \) with a group structure, unique up to affine isomorphisms. In particular, then the group rank is welldefined; this plays a role e.g. when we want to apply theorem \ref{1.14}.

Pick a basepoint \(\ (x_0,\theta_0)\ \) of \(\ E|\ \) and consider the map
\[
 \psi\ :\ \pi_1(F_N,f_2(x_0))\ \longrightarrow\ \pi_0\l(E|\r)
\]
where, by definition, \(\ \psi\l(\l[\eta\r]\r)\ \) is the pathcomponent of \(\ (x_0,\theta_0\ast \eta)\ \), determined by the concatenated path
\[
 \xymatrix@=1.5cm{
f_1(x_0)\ \ar@{|->}[r]^-{\theta_0}&f_2(x_0)\ar@{|->}[r]^-{\eta}&f_2(x_0)
},
\]
\([\eta]\in\pi_1\l(F_N,f_2(x_0)\r)\ \). The connectedness of \(\ F_M\ \) implies that \(\ \psi\ \) is onto. Furthermore two elements \(\ [\eta_1],\ [\eta_2]\in\pi_1\l(F_N,f_2(x_0)\r)\ \cong\ H_1\l(F_N;\Z\r)\ \) have the same value under \(\ \psi\ \) if and only if there exists a loop \(\ \gamma\ \) at \(\ x_0\ \) in \(\ F_M\ \) such that the loop
 \[
\theta_0\ \ast\ \eta_1\ \ast\ \l(f_2\circ\gamma\r)\ \ast\ \eta_2^{-1}\ \ast\ \theta_0^{-1}\ \ast\ \l(f_1\circ\gamma\r)^{-1}
\]
is nullhomotopic in \(\ F_N\ \) or, equivalently, the cycle \(\ \eta_1-\eta_2-f_1\circ\gamma+f_2\circ\gamma\) is nullhomologuous. Thus \(\ \psi\ \) induces a bijection \(\ \psi_{x_0,\theta_0}\ \) from the group
\begin{equation}\label{2.4}
 \boldsymbol{G}\ :=\ \mathrm{Coker}\l(f_1|_\ast - f_2|_\ast\ :\ H_1\l(F_M;\Z\r)\ \longrightarrow\ H_1\l(F_N;\Z\r)\r)
\end{equation}
onto the set \(\ \pi_0\l(E|\r)\ \).

Next let us check that \(\ \psi_{x_0,\theta_0}\ \) transforms the action of any given element \(\ [\sigma]\in\pi_1(B,\ast)\ \) on \(\ \pi_0\l(E|\r)\ \) into an affine automorphism of \(\ G\ \). Let \(\ A\ :\ F_N\ \longrightarrow\ F_N\ \) be a gluing map for the bundle over \(\ S^1\ =\ [0,1]\diagup 1\sim 0\ \) obtained by pulling \(\ (N,p_N)\ \) back via \(\ \sigma\ \); thus
\[
 \sigma^\ast(N)\ \cong\ \l(F_N\times[0,1]\r)\diagup(x,1)\sim(A(x),0).
\]
Also lift \(\ \sigma\ \) to a loop \(\ \w{\sigma}\ \) at \(\ x_0\ \) in \(\ M\ \). Given \(\ [\eta]\ \in\ \pi_1\l(F_N,\,f_2(x_0)\r)\ \cong\ H_1\l(F_N;\Z\r)\ \), the action of \(\ \w{\sigma}\ \) first takes \(\ (0,\,\theta_0\ast\eta)\ \) to \(\ \l(1, \,\tau_1^{-1}\ast\theta_0\ast\eta\ast\tau_2\r)\ \); here \(\ \tau_i\ :\ [0,1]\ \to\ F_N\ \) is a path whose graph in \(\ F_N\times[0,1]\ \) corresponds to the map \(\ f_i\circ\w{\sigma}\ \) into \(\ N\ \), \(i=1,2\). After applying the gluing map we see that
\[
 \l[\w{\sigma}\r]\ \ast_B\ \psi_{x_0,\theta_0}([\eta])\ =\ \psi_{x_0,\theta_0}([\o{\eta}])
\]
where \(\ \o{\eta}\ \) is the sum of the cycles \(\ A\circ\eta\ \) and  \(\ -\theta_0-A\circ\tau_i+A\circ\theta_0+A\circ\tau_2\ \). Thus the operation \(\ \ast_B\ \) gives rise, via \(\ \psi_{x_0,\theta_0}\ \), to a group action \(\ \boldsymbol{\beta}\ \) of \(\ \pi_1(B,\ast)\ \) on \(\ G\ \) by affine automorphisms.

Let us also examine how this depends on the choice of the basepoint \(\ (x_0,\theta_0)\ \) of \(\ E|\ \). Any path \(\ \gamma\ \) in \(\ F_M\ \) from \(\ x_0\ \) to some other point \(\ x_0'\ \) lifts to a path in \(\ E|\ \) from \(\ (x_0,\,\theta_0\ast\eta)\ \) to \(\ \l(x_0',\, \theta_0'\ast(f_2\circ\gamma)^{-1}\ast\eta\ast(f_2\circ\gamma)\r)\ \) for some element of \(\ \theta_0'\ \in\ \pi_{x_0'}\ =\ \mathrm{pr}^{-1}\{x_0'\}\ \) which does not depend on \(\ [\eta]\in G\ \). Thus \(\ \psi_{x_0,\theta_0}\ =\ \psi_{x_0',\theta_0'}\ \). Any other choice of a basepoint \(\ (x_0',\theta_0'')\ \) over \(\ x_0'\ \) gives rise to a translation in \(\ G\ \).

Finally we observe that the \(\ \pi_1(B,\ast)\ \)-action \(\ \beta\ \) on \(\ G\ =\ \mathrm{Coker}\l(f_1|_\ast-f_2|_\ast\r)\ \) is compatible with fiberwise homotopies \(\ h_i\ :\ M\times[0,1]\ \longrightarrow\ N\ \) from \(\ f_i\ \) to some map \(\ f_i',\ i=1,2\ \). Indeed, define a fiber homotopy equivalence
\[
 H\ :\ \mathrm{E}_B(f_1,f_2)\ \longrightarrow\ \mathrm{E}_B(f_1',f_2')
\]
by \(\ H(x,\theta)\ =\ \l(x,\,\l(h_1(x,-)\r)^{-1}\ast\theta\ast h_2(x,-)\r)\ \), \(\ (x,\theta)\ \in\ \mathrm{E}_B(f_1,f_2)\ \). It commutes with bijections as described in \ref{2.4}, i.e.
\[
 H|\,\circ\,\psi_{x_0,\theta_0}\ =\ \psi'_{x_0,\theta'_0}\ :\ G\ \stackrel{\approx}{\longrightarrow}\ E'|\,:=E\l(f_1'|F_M,\,f_2'|F_M\r)
\]
(compare \ref{1.10neu} and \ref{1.11neu}) where we choose \(\ (x_0,\theta'_0)\ :=\ H(x_0,\theta_0)\ \) to be the basepoint of \(\ E'|\ \). Moreover both bijections yield the same \(\ \pi_1(B,\ast)\ \)-action on \(\ G\ \). 

We have proved
\begin{thm}\label{2.5}
 Assume that the fibers \(\ F_M\ \) and \(\ F_N\ \) are connected and that\(\ F_N\ \) has an abelian fundamental group. Then the equivalence class (cf. \ref{1.10})
\[
 \varrho_B(f_1,f_2)\ :=\ \l[\l(G,\beta\r)\r]\ \in\ \mathcal{R}_B
\]
depends only on the fiberwise homotopy classes of \(\ f_1\ \) and \(\ f_2\ \).
\end{thm}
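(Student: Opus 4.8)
The plan is to exhibit, for each pair $(f_1,f_2)$, an explicit representative $(G,\beta)$ of the sought class and then to verify the two assertions hidden in the statement: that $[(G,\beta)]\in\mathcal{R}_B$ is independent of all choices, and that it is a fiberwise homotopy invariant. The starting point is the $\pi_1(B,\ast)$-action on $\pi_0(E|)$ already described in the introduction and re-derived here from the orbit picture of Theorem \ref{2.2}; I would take this action as given and concentrate on transporting it to the algebraically convenient group $G=\mathrm{Coker}\l(f_1|_\ast-f_2|_\ast\r)$ of \ref{2.4}. Concretely, I would fix a basepoint $(x_0,\theta_0)\in E|$ and form the comparison map $\psi$ sending $[\eta]\in\pi_1\l(F_N,f_2(x_0)\r)$ to the pathcomponent of $(x_0,\theta_0\ast\eta)$. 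Connectedness of $F_M$ gives surjectivity, and by analysing when $\theta_0\ast\eta_1$ and $\theta_0\ast\eta_2$ lie in one pathcomponent of $E|$ — namely when $\eta_1-\eta_2$ is homologous to $\l(f_1-f_2\r)\circ\gamma$ for some loop $\gamma$ in $F_M$ — I would identify the fibres of $\psi$ with the subgroup $\l(f_1|_\ast-f_2|_\ast\r)\l(H_1(F_M)\r)$. Since $\pi_1(F_N)$ is abelian it coincides with $H_1(F_N;\Z)$, so $\psi$ descends to a bijection $\psi_{x_0,\theta_0}\colon G\xrightarrow{\ \approx\ }\pi_0(E|)$.

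Next I would transport the action. For $[\sigma]\in\pi_1(B,\ast)$ I would pull $(N,p_N)$ back along $\sigma$, obtaining a gluing map $A\colon F_N\to F_N$, lift $\sigma$ to a loop $\w\sigma$ at $x_0$ in $M$, and track the action of $\w\sigma$ through the lifting construction of $\ast_B$. The outcome I expect is $[\w\sigma]\ast_B\psi_{x_0,\theta_0}([\eta])=\psi_{x_0,\theta_0}\l(A_\ast[\eta]+c_\sigma\r)$ for a constant $c_\sigma$ depending only on $\sigma$ and on the paths $\tau_i$ arising from $f_i\circ\w\sigma$. Thus the transported action $\beta$ is, element by element, the sum of the homomorphism $A_\ast$ and a translation, i.e. affine in the sense of \ref{1.10}, so that $\beta$ is a genuine $\pi_1(B,\ast)$-action on $G$ by affine automorphisms.

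Finally I would settle the two invariances, which is precisely where the equivalence relation of \ref{1.10} does the work. Moving $x_0$ along a path in $F_M$ leaves $\psi_{x_0,\theta_0}$ unchanged, while replacing $\theta_0$ by another lift over the same point composes $\psi$ with a translation of $G$; both are affine isomorphisms, so $[(G,\beta)]$ is independent of the basepoint. For homotopy invariance I would exhibit the fibre homotopy equivalence $H(x,\theta)=\l(x,(h_1(x,-))^{-1}\ast\theta\ast h_2(x,-)\r)$ attached to fiberwise homotopies $h_i\colon f_i\simeq f_i'$, check that it intertwines $\psi_{x_0,\theta_0}$ with $\psi'_{x_0,\theta_0'}$ for $(x_0,\theta_0')=H(x_0,\theta_0)$, and carries the $\ast_B$-action for $(f_1,f_2)$ to that for $(f_1',f_2')$; since $H$ induces the identity on homology, the two affine actions coincide, so $[(G,\beta)]$ depends only on the fiberwise homotopy classes.

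The main obstacle I expect is the second step: verifying that the $\pi_1(B,\ast)$-action transports to an \emph{affine} automorphism of $G$ rather than a merely set-theoretic permutation, and correctly separating its linear part $A_\ast$ from the translation $c_\sigma$. This requires careful bookkeeping of the concatenated path $\tau_1^{-1}\ast\theta_0\ast\eta\ast\tau_2$ under the gluing $A$ and of the passage from homotopy classes of paths to homology classes of cycles; once the affine structure of $\beta$ is pinned down, the remaining well-definedness and invariance checks are comparatively routine.
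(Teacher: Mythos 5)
Your proposal is correct and follows essentially the same route as the paper's own proof: the same comparison map \(\psi_{x_0,\theta_0}\) onto \(\pi_0(E|)\) with fibres given by cosets of \(\l(f_1|_\ast-f_2|_\ast\r)\l(H_1(F_M;\Z)\r)\), the same transport of the \(\pi_1(B,\ast)\)-action via the gluing map \(A\) and the lifted loop \(\w{\sigma}\) (yielding exactly the affine form \(A_\ast[\eta]+c_\sigma\)), and the same basepoint-independence and homotopy-invariance checks using the fibre homotopy equivalence \(H(x,\theta)=\l(x,(h_1(x,-))^{-1}\ast\theta\ast h_2(x,-)\r)\). You also correctly identify the affineness of the transported action as the step requiring the most care, which is precisely where the paper does its detailed bookkeeping with the paths \(\tau_i\).
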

We call it the \textbf{Reidemeister invariant} of the pair \(\ (f_1,f_2)\ \). The orbits of \(\ (G,\beta)\ \) not only can be used to label the Nielsen coincidence classes but may even capture much further essential information.

\section{Counting orbits}\label{sec3}
In this section we study the size and the number of orbits of certain affine automorphisms which occur e.g. in the setting of example \ref{1.20}.

\begin{thm}\label{3.1}
 Let \(\ w\ \) be a fixed element of an abelian group \(G\) and let \(\ \alpha\ \) be a group automorphism of \(G\) such that \(\ \alpha^2\ \) is the identity map \(\mathrm{id}\). Define \(\ \beta\ :\ G\ \longrightarrow\ G\ \) by
\[
 \beta(u)\ =\ \alpha(u)+w,\quad u\in G.
\]
Let \(\ q_0\ \) denote the cardinality of the subgroup of \(G\) generated by \(\ (\alpha+\mathrm{id})(w)\).

If \(\ q_0\ \) is infinite then so is every orbit
\[
 \{\ldots,\beta^{-1}(u),u,\beta(u),\beta^2(u),\ldots\}
\]
of \(\beta\).

If \(\ q_0\ \) is finite, then \(\ q_0\ \) (if \(\ q_0\equiv 1(2)\ \)) and \(\ 2q_0\ \) are the only possible orders of any orbit of \(\beta\). Odd order orbits exist if and only if
\[
q_0\ \text{is odd and }\ q_0 w\ \in\ (\alpha-\mathrm{id})(G).\tag{*}
\]
(This holds e.g. if \(\ q_0\equiv 1(2)\ \) and \(\ (\alpha-\mathrm{id})(G)\ =\ \mathrm{ker}\,(\alpha+\mathrm{id})\).) The numbers of \emph{all} orbits of odd (or even) order are

\begin{align*}
 \nu_\mathrm{odd}(\beta)\ &=\ \begin{cases}
\l(\#\mathrm{ker}\, (\alpha-\mathrm{id})\r)\diagup q_0 & \text{if (*) holds};\\
\ 0 &\text{otherwise};
 \end{cases}\\
 \nu_\mathrm{even}(\beta)\ &=\ \begin{cases}
\l(\#\l(G\diagdown\mathrm{ker}\,(\alpha-\mathrm{id})\r)\r)\diagup 2q_0 & \text{if (*) holds};\\
\# G\diagup 2q_0 &\text{otherwise}.
 \end{cases}
\end{align*}
\end{thm}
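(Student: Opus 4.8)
The plan is to reduce everything to explicit formulas for the iterates of $\beta$. Since $\alpha^2=\mathrm{id}$, a one-line computation gives $\beta^2(u)=\alpha(\alpha(u)+w)+w=u+(\alpha+\mathrm{id})(w)$, so $\beta^2$ is translation by $z:=(\alpha+\mathrm{id})(w)$, and $q_0$ is the order of $z$. Noting $\alpha(z)=(\alpha^2+\alpha)(w)=z$, induction yields
\[
 \beta^{2k}(u)=u+kz,\qquad \beta^{2k+1}(u)=\alpha(u)+w+kz,\qquad k\in\Z.
\]
These two formulas drive the whole argument. In particular, if $q_0$ is infinite then the points $\beta^{2k}(u)=u+kz$ are pairwise distinct, so every orbit is infinite, settling that case at once.

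Assume now $q_0<\infty$. For fixed $u$ the stabilizer $\{n\in\Z:\beta^n(u)=u\}$ is a subgroup $N\Z$, and the orbit order is $N$. The first formula shows its even members are exactly $2q_0\Z$ (since $kz=0\Leftrightarrow q_0\mid k$). A short parity analysis then forces $N=2q_0$ when $N$ is even, and $N=q_0$ with $q_0$ odd when $N$ is odd (if $N$ were odd its even multiples $2N\Z$ would have to equal $2q_0\Z$). This proves that $2q_0$, and $q_0$ only for odd $q_0$, are the sole possible orbit orders, and that $u$ lies on an odd orbit precisely when $q_0$ is odd and $\beta^{q_0}(u)=u$, i.e.\ $(\alpha-\mathrm{id})(u)=-w-\tfrac{q_0-1}{2}z$ by the second formula.

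The key step is recognizing condition (*). Such a $u$ exists iff the right-hand side lies in $(\alpha-\mathrm{id})(G)$. I would exploit that $z-2w=(\alpha-\mathrm{id})(w)\in(\alpha-\mathrm{id})(G)$, hence $z\equiv 2w$ modulo $(\alpha-\mathrm{id})(G)$ and $-w-\tfrac{q_0-1}{2}z\equiv -q_0w$ there; so solvability is equivalent to $q_0w\in(\alpha-\mathrm{id})(G)$. This is exactly (*). The parenthetical special case follows since $(\alpha-\mathrm{id})(G)\subseteq\mathrm{ker}(\alpha+\mathrm{id})$ always (because $(\alpha+\mathrm{id})(\alpha-\mathrm{id})=\alpha^2-\mathrm{id}=0$), while $(\alpha+\mathrm{id})(q_0w)=q_0z=0$ places $q_0w$ in $\mathrm{ker}(\alpha+\mathrm{id})$.

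For the counts, put $F=\{u:\beta^{q_0}(u)=u\}$, the solution set of $(\alpha-\mathrm{id})(u)=-w-\tfrac{q_0-1}{2}z$. When (*) holds $F$ is a nonempty coset of $\mathrm{ker}(\alpha-\mathrm{id})$, so $\#F=\#\mathrm{ker}(\alpha-\mathrm{id})$; since $F$ is exactly the union of the odd orbits, each of size $q_0$, this gives $\nu_{\mathrm{odd}}=\#\mathrm{ker}(\alpha-\mathrm{id})/q_0$. The complement $G\setminus F$ is partitioned into even orbits of size $2q_0$, and $\#(G\setminus F)=\#G-\#\mathrm{ker}(\alpha-\mathrm{id})=\#(G\setminus\mathrm{ker}(\alpha-\mathrm{id}))$, yielding the stated $\nu_{\mathrm{even}}$. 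When (*) fails, $F=\emptyset$, there are no odd orbits, and all of $G$ splits into even orbits, so $\nu_{\mathrm{even}}=\#G/2q_0$. The only genuinely delicate point I anticipate is this identification of (*): passing from the literal solvability condition to the clean statement $q_0w\in(\alpha-\mathrm{id})(G)$ hinges on the reduction $z\equiv 2w$ modulo $(\alpha-\mathrm{id})(G)$; once the iterate formulas and the stabilizer analysis are in hand, the rest is bookkeeping.
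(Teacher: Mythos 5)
Your proposal is correct and follows essentially the same route as the paper: both rest on the iterate formulas \(\beta^{2k}(u)=u+k(\alpha+\mathrm{id})(w)\) and \(\beta^{2k+1}(u)=\alpha(u)+w+k(\alpha+\mathrm{id})(w)\), identify the odd-order orbits with the fixed points of \(\beta^{q_0}\), reduce solvability to condition (*) via the identity \((q(\alpha+\mathrm{id})+\mathrm{id})(w)=(2q+1)w+(\alpha-\mathrm{id})(qw)\), and count orbits by observing that the fixed-point set of \(\beta^{q_0}\) is a coset of \(\mathrm{ker}\,(\alpha-\mathrm{id})\). Your stabilizer-subgroup phrasing is a slightly more explicit packaging of the paper's implication chain (i)--(v), but the mathematical content is the same.
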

\begin{proof}
 For all natural numbers \(q\) and all \(\ u\in G\ \) we have
 \begin{subequations}
\begin{align}
  \beta^{2q}(u)\ &=\ u+q(\alpha+\mathrm{id})(w);\label{3.2a}\\
\beta^{2q+1}(u)\ &=\ \alpha(u)+(q(\alpha+\mathrm{id})+\mathrm{id})(w).\label{3.2b}
\end{align}
 \end{subequations}

In view of the first identity orbits can be finite only if \(\ q_0\ \) is finite. Then \(\ \beta^{2q_0}=\mathrm{id}\). Therefore \(\ 2q_0\ \) is an upper bound for all orbit orders and, in fact, their only possible \emph{even} value. Moreover, each of the following statements implies the next one (compare \ref{3.2b}):
\begin{enumerate}[(i)]
 \item there exists an orbit of odd order \(\ 2q+1\);
\item \(\beta^{2q+1}\ \) has a fixed point;
\item \((q(\alpha+\mathrm{id})+\mathrm{id})(w)=(2\,q+1)w+(\alpha-\mathrm{id})(qw)\ \in\ (\alpha-\mathrm{id})(G)\);
\item \((q(\alpha+\mathrm{id})+\mathrm{id})(w) \ \in\ \mathrm{ker}\,(\alpha+\mathrm{id})\ \) or, equivalently, \(\ (2q+1)(\alpha+\mathrm{id})(w)=0;\)
\item \(2q+1\ \in\ q_0\cdot\Z\).
\end{enumerate}

Due to the upper bound \(\ 2q_0\ \), condition (i) can hold only in the case \(\ 2q+1=q_0\). Here it is equivalent to conditions (ii) and (iii). Moreover the union of all odd order orbits is the fixed point set 
\[
 (\alpha-\mathrm{id})^{-1}\l\{-(q(\alpha+\mathrm{id})+\mathrm{id})(w)\r\}
\]
of \(\ \beta^{q_0}\); if nonempty, it has the same cardinality as the fixed point set \(\ \mathrm{ker}\, (\alpha-\mathrm{id})\ \) of \(\alpha\). This implies our claim concerning the orbit numbers.

Clearly \(\ (\alpha-\mathrm{id})(G)\ \subset\ \mathrm{ker}\,(\alpha+\mathrm{id})\); if these groups coincide, then all five conditions (i), \ldots, (v) hold provided \(\ 2q+1=q_0\).
\end{proof}

\section{Torus bundles and orbit analysis}\label{sec4}
We are now prepared to carry out the explicit orbit counts needed to deduce theorems \ref{1.21} and \ref{1.22} from \ref{1.14}. For this we use the notations of sections \ref{sec1} and \ref{sec3} and describe elements of quotient groups by their representatives, put into square brackets, as usual.

\noindent\emph{Proof of theorem \ref{1.21}.}
The Reidemeister invariant \(\ \varrho_B(f_1,f_2)\ \) in question (cf. \ref{1.11} and \ref{1.12}) is represented by the group
\begin{equation}\label{4.1}
 G\ =\ \mathrm{Coker}\, \o{L}\ =\ \Z^2\diagup\l(\Z(a,b)+\Z(b,a)\r)
\end{equation}
together with the affine automorphism \(\ \beta=\alpha+w\ \) where \(\ \alpha([u_1,u_2])=[u_2,u_1]\ \) and \(\ w=-\alpha([\o{v}])\).

Although the final result in Theorem \ref{1.21} looks quite homogeneous we have to distinguish three cases (labelled by the codimension of \(\ \o{L}\l(\R^2\r)\ \) in \(\ \R^2\)) which correspond to very different geometric situations.\\

\noindent \textit{Case 0: }\(|a|\neq|b|\) (i.e. \(\ \o{L}\ \) is injective).\\
Since \(\ (a+b,a+b)=(a,b)+(b,a)\ \) in \(\ \Z^2\ \) we have the sequence of welldefined homomorphisms
\begin{equation}\label{4.2}
 \xymatrix@=1.5cm{
0\ \ar[r]\ &\ \Z_{|a+b|}\ \ar[r]^-{\triangle} \ &G\ \ar[r]^-{r}\ &\ \Z_{|a-b|}\ \ar[r]&\ 0
}
\end{equation}
where \(\ \triangle([z])=[z,z]\ \) and \(\ r([u_1,u_2])=[u_1-u_2]\). Note that each class in the quotient group \(\ G\ \) can be represented by a unique point (with integer coordinates) in the parallelogram
\[
 \l\{s(a,b)+t\,(a+b,a+b)\ |\ s,t\in[0,1)\r\}\ \subset\ \R^2.
\]
Thus the order of \(\ G\ \) equals \(\ |a-b|\cdot|a+b|=|a^2-b^2|\ \) and the sequence \ref{4.2} is exact.

\begin{siderem}\label{4.3neu}
 The group \(G\) is isomorphic to the direct sum of the cyclic groups of orders \(\ k=\mathrm{gcd}\,(a,b)\ \) and \(\ |a^2-b^2|/ k\). In particular, \(G\) is cyclic if and only if \(a\) and \(b\) are relatively prime.

The following table describes \(G\) up to isomorphisms for low values \(\ |a|>|b|>0\).\\
\begin{center}
\begin{tabular}{c|c|c|c|c|c|c}
\((a,b)\) & \((\pm 2,\pm 1)\) & \((\pm 3,\pm 1)\) & \((\pm 3,\pm 2)\) & \((\pm 4,\pm 1)\) & \((\pm 4,\pm 2)\) & \((\pm 4,\pm 3)\) \\ \hline
\(G\) & \(\Z_3\) & \(\Z_8\) & \(\Z_5\) & \(\Z_{15}\) & \(\Z_2\oplus \Z_6\) & \(\Z_7\)
\end{tabular}
\end{center}~\\
Since we will not need all this we leave the proof to the reader. \(\hfill\Box\)
\end{siderem}

Next note that \(\ (\alpha+\mathrm{id})([u_1,u_2])\ =\ \triangle([u_1+u_2])\ \) (cf. \ref{4.1} and \ref{4.2}). Hence
\[
 (\alpha+\mathrm{id})(G)\ =\ \triangle(G)\ \cong\ \Z_{|a+b|}
\]
and \(\ \#\mathrm{ker}\,(\alpha+\mathrm{id})\ =\ |a-b|\). A similar argument holds for the switching involution \(\ \alpha'\ \) on \(\ G'\ :=\ \Z^2\diagup\l(\Z(a,-b)+\Z(-b,a)\r)\). But complex conjugation, when restricted to \(\ \Z^2\ \subset\ \R^2\ =\ \mathbb{C}\), induces an isomorphism \(\ G'\ \cong\ G\ \) which tranforms \(\ \alpha'+\mathrm{id}\ \) into \(\ -\alpha+\mathrm{id}\ \). Therefore
\[
 \#\mathrm{ker}\,(\alpha-\mathrm{id})\ =\ \#\mathrm{ker}\,(\alpha'+\mathrm{id})\ =\ |a+b|
\]
and the groups \(\ (\alpha-\mathrm{id})(G)\ \subset\ \mathrm{ker}\,(\alpha+\mathrm{id})\), both having the cardinality \(\ |a-b|\), must agree. Thus an important criterion in theorem \ref{3.1} is satisfied. Theorem \ref{1.21} (as far as Case 0 is concerned) now follows since
\[
 (\alpha+\mathrm{id})(w)\ =\ -[c,c]\ =\ -\triangle([c])
\]
has the same order in \(G\) as \(\ [c]\ \) has in \(\ \Z_{|a+b|}\ \), namely \(\ q_0\ =\ |a+b|/d\ \) (compare \ref{4.2}).

\noindent\emph{Geometric meaning.}
Whenever \(\ f_i'\ \sim_B\ f_i,\ i=1,2,\ \) the coincidence locus \(\ C(f'_1,f'_2)\ \) intersects each fiber \(\ F\ \) of \(\ p_M\ \) in a set of cardinality at least \(\ |a^2-b^2|>0\). Thus \(\ \mathrm{MC}_B(f_1,f_2)=\infty\). However \(\ \mathrm{MCC}_B(f_1,f_2)\ \) will often be much smaller than \(\ |a^2-b^2|\ \) since many points of \(\ C(f_1',f_2')\ \cap\ F\ \) may lie in the same pathcomponent of \(\ C(f'_1,f'_2)\).\\

\noindent\textit{Case 1: \(\ a= -\varepsilon b\neq0\ \) where \(\ \varepsilon=\pm 1\) (this means that \(\ \o{L}\l(\R^2\r)\ \) is \(1\)-dimensional)}.\\
We have the isomorphism
\begin{equation}\label{4.4neu}
 G\ =\ \l(\Z(-\varepsilon,1)\oplus\Z(1,0)\r)\diagup\Z(-\varepsilon\, a,a)\ \stackrel{\cong}{\longrightarrow}\ \Z_{|a|}\oplus\Z
\end{equation}
which maps \(\ [u_1,u_2]\ \) to \(\ ([u_2],u_1+\varepsilon\, u_2)\ \) and which transforms \(\ \alpha\ \) into the involution
\[
 ([s],t)\ \longrightarrow\ ([t]-\varepsilon\, [s],\varepsilon\, t),\quad ([s],t)\ \in\ \Z_{|a|}\oplus\Z.
\]
As in [Ko5] we must distinguish two subcases according to whether the gluing map \(\ \o{A}_M\ \) preserves the coorientations of \(\ \o{L}\l(\R^2\r)\ \) in \(\ \R^2\ \) or not.\\

\noindent\textit{Subcase 1+: \(\ \varepsilon=+1\).}\\
Here \(\ \mathrm{ker}\,(\alpha-\mathrm{id})\ \) is infinite and 
\[
 \#\l(G\diagdown \mathrm{ker}\,(\alpha-\mathrm{id})\r)=\begin{cases}
0&\text{if }\ |a|=1;\\
\infty &\text{else}.
\end{cases}
\]
Thus the \(\ \mathrm{mod}\,\infty\ \) orbit numbers \(\ \nu'_{\mathrm{odd}}\ \) and \(\ \nu'_{\mathrm{even}}\ \) of \(\ (G,\beta)\ \) vanish (cf. \ref{1.14} and \ref{3.1}). Infinite orbits exist only if \(\ c\neq 0\ \), and then each of them intersects the subset \(\ \Z_{|a|}\oplus\l\{1,\ldots,|c|\r\}\ \) of \(\ \Z_{|a|}\oplus\Z\ \) in a unique point. Thus 
\[
 \nu_B(G,\beta)=\nu_\infty'(G,\beta)=|a|\cdot|c|=\frac{d}{2}\cdot|a-b|
\]
where \(\ d=\mathrm{gcd}\,(0,c)=|c|\geq 0\).

\noindent\emph{Geometric meaning.}
If \(\ c\ \) vanishes then \(\ f_{L,\o{v}}(M)\ \subset\ M\ \) is a coorientable submanifold of codimension \(1\) and we can push \(\ f_{0,\o{0}}\ \) away from \(\ f_{L,\o{v}}(M)\ \) into some ''positive direction``. Thus
\[
 \mathrm{MCC}_B(f_1,f_2)\ =\ \mathrm{MCC}_B\l(f_{L,\o{v}},f_{0,\o{0}}\r)\ =\ 0
\]
in this case (compare remark \ref{1.19}).\\

\noindent\textit{Subcase 1-: \(\ \varepsilon= -1\).}\\
Here \(\ a=b\neq0\ \) and the isomorphism 
\[
 (\alpha+\mathrm{id})(G)\ =\ \mathrm{ker}\,(\alpha-\mathrm{id})\ \cong\ \Z_{|a|}\oplus\{0\}
\]
(cf. \ref{4.4neu}) sends \(\ (\alpha+\mathrm{id})(w)\ =\ -[c,c]\ \) to \(\ -\l([c],0\r)\). Thus \(\ q_0\ \) (cf. \ref{3.1}) is equal to \(\ |a|/\mathrm{gcd}\,(a,c)\). Define also \(\ c'\ :=\ c/\mathrm{gcd}\,(a,c)\). Note that \(\ (\alpha-\mathrm{id})(G)\ \) agrees with the kernel of the sum homomorphism from \(G\) to \(\ \Z_{2\,|a|}\ \) which maps \(\ [u_1,u_2]\ \) to \(\ [u_1+u_2]\ \) and hence \(w\) to \(\ -[c]\in\Z_{2\,|a|}\). Therefore condition (*) in theorem \ref{3.1} holds if and only if \(\ q_0\equiv 1(2)\ \) and \(\ q_0\cdot c = |a|\cdot c'\equiv 0\ \mathrm{mod}\,2\,|a|\ \) or, equivalently, \(c'\) is even. In turn, this implies that \(\ d=2\,\mathrm{gcd}\,(a,c)\ \) and that \(\ 2\,|a|=q_0\,d\ \) is an odd multiple of \(d\). On the other hand, if \(\ c'\not\equiv 0(2)\ \) then \(\ d=\mathrm{gcd}\,(a,c)\ \) and \(\ 2\,|a|=2\,q_0\,\mathrm{gcd}\,(a,c)\ \) is an even multiple of \(d\). In view of theorems \ref{1.14} and \ref{3.1} this implies theorem \ref{1.21} as far as it concerns subcase \(1-\).\\
\noindent\emph{Geometric meaning.}
Here the gluing map \(\ \o{A}_N\ \) makes the normal line bundle of the submanifold \(\ f_{L,\o{v}}(M)\ \) (cf. \ref{1.15}) in \(M\) nonorientable. If a Reidemeister class \(\ Q\ \) of \(\ \l(f_{L,\o{v}},f_{0,\o{0}}\r)\ \) (compare \ref{1.19}) is given by an even order orbit then (generic, standard) coincidence components of the corresponding Nielsen class can be removed in a pairwise fashion (see the proof of [Ko5], theorem 1.18). However, in the case of odd order orbits such a deformation of \(\ \l(f_{L,\o{v}},f_{0,\o{0}}\r)\ \) is not possible: \(\ f_{0,\o{0}} \) must necessarily cross \(\ f_{L,\o{v}}(M)\ \) and contribute coincidence data which make \(\ Q\ \) essential (cf. \ref{1.6}).\\

\noindent\textit{Case 2: \(\ a=b=0\ \)}(i.e. \(\ L=0\)).\\
\noindent Here the rank of \(\ G\ =\ \Z^2\ \) exceeds the rank of \(\ \pi_1(S^1)\ \cong\ \Z\ \); thus \(\ \mathrm{MCC}_B(f_1,f_2)=0\ \) by theorem \ref{1.14}. This agrees with our claim since \(\ d=0\ \) or else \(\ |a+b|=0\ \) can only be an even multiple of \(\ d\).\\
\noindent\emph{Geometric meaning.}
By transversality the zero section of \(N\), and hence \(\ f_{0,\o{0}}\ \), can be deformed until they avoid the \(2\)-codimensional curve \(\ f_{L,\o{v}}(M)\ \) in \(\ N\) (compare remark \ref{1.19}).\(\hfill\Box\)\\

\noindent\emph{Proof of theorem \ref{1.22}.} The gluing matrix of \(\ M\ \times_B\ N'\ \) corresponds to \(\ \o{A}_m\oplus\o{A}'\ \) where \(\ \o{A}'= \pm\,\mathrm{id}\ \). If \(\ \varrho_B(f_1,f_2)\ =\ [(G,\beta)]\ \), then
\[
 \varrho_B\l(\hat{f}_1,\hat{f}_2\r)\ =\ \l[\l(G\oplus\Z,\beta\oplus\o{A}'\r)\r].
\]
According to theorem \ref{1.14} the minimum number \(\ \mathrm{MCC}_B\l(\hat{f}_1,\hat{f}_2\r)\ \) can be nontrivial only if \(G\) is finite, i.e. \(\ |a|\neq|b|\). If \(\ N'=K\ \), i.e. \(\ \o{A}'\ =\ -\mathrm{id}\ \), the situation resembles Subcase \(1-\) in the previous proof: there exist infinitely many even order orbits (each lying in \(\ G\oplus\{\pm\, t\}\ \) for some \(\ t\in \Z\ \)), but the odd order orbits are contained in \(\ G\oplus\{0\}\ \) (and can be counted by the methods of Case 0).

If \(\ N'=T\ \), i.e. \(\ \o{A}'=\mathrm{id}\ \), then each orbit of \(\ \beta\ \) gets reproduced at every \(\ \Z\)-level; the orbit numbers, when reduced \(\ \mathrm{mod}\,\infty\ \) (cf. \ref{1.14}), vanish.\\
\noindent\emph{Geometric meaning.}
Each pair \(\ (f_1,f_2)\ \) can be made coincidence free in the extra space provided by \(\ N\ \times_B\ T\ =\ N\ \times\ S^1\ \): just push \(f_1\) and \(f_2\) into different \(S^1\)-levels. This is not always possible in \(\ N\ \times_B\ K\ \) where no coorientation of \(\ N\ \times\ \{0\}\ \) (and hence no canonical direction into which to push) exists and therefore odd order orbits may survive. \(\hfill\Box\)

\subsection*{Acknowledgment}
This research was supported in part by DAAD (Deutscher Akademischer Austauschdienst).

\end{document}